\numberwithin{equation}{section}
\newtheorem{theo}{Theorem}
\newtheorem{lemma}[theo]{Lemma}
\newtheorem{prop}[theo]{Proposition}
\newtheorem{cor}[theo]{Corollary}
\newtheorem{defi}[theo]{Definition}
\newtheorem{assum}[theo]{Assumptions}
\theoremstyle{definition}
\newtheorem{rem}[theo]{Remark}
\newtheorem{rems}[theo]{Remarks}
\numberwithin{theo}{section}
\author{Delio Mugnolo}
\thanks{This work has been written while the author was supported by the Istituto Nazionale di Alta Matematica ``Francesco Severi'' as a Ph.D. student at the University of T\"ubingen. The author thanks his supervisor, Prof. Rainer Nagel, for countless motivating discussions.}
\address{Dipartimento di Matematica dell'Universit{\`a} degli Studi, Via Orabona 4, I-70125 Bari, Italy}
\email{mugnolo@dm.uniba.it}
\title[Operator matrices and cosine functions]{Operator matrices as generators of cosine operator functions}
\subjclass{47D05, 47H20, 35L20}
\keywords{Operator matrices, cosine operator functions, differential operators equipped with dynamical or generalized Wentzell boundary conditions.}
\begin{document}

\begin{abstract}
We introduce an abstract setting that allows to discuss wave equations with time-dependent boundary conditions by means of operator matrices. We show that such problems are well-posed if and only if certain perturbations of the same problems with homogeneous, time-independent boundary conditions are well-posed. As applications we discuss two wave equations in $L^p(0,1)$ and in $L^2(\Omega)$ equipped with dynamical and acoustic-like boundary conditions, respectively.
\end{abstract}

\maketitle

\section{Introduction}

The theory of (undamped) second order abstract Cauchy problems has been studied for a long time. To that purpose, cosine operator functions and first order reductions have been introduced already in the 1960s (see \cite{[So66]}, \cite{[DG67]}, \cite{[Fa68]} and \cite{[Fa68b]}, \cite{[Go69b]}). In order to use these abstract theories for wave equations on bounded domains, homogeneous time-independent boundary conditions have most frequently been considered so far. For classical results we refer to \cite{[Fa85]} and \cite{[XL98]} (see also \cite[\S~II.7]{[Go85]} and \cite[Chapt.~7]{[ABHN01]}). However, time-dependent boundary conditions also occur in many situations. We mention in particular wave equations with acoustic boundary conditions (see~\cite{[Mu04]} and references therein), boundary contact problems (see~\cite{[Be00]} and references therein), and mechanical systems that can be modeled as second order problems with first order dynamical conditions (see~\cite{[CENP04]}). 

\bigskip
In this paper we choose an abstract approach and study second order abstract initial-boundary value
problems with {\sl dynamical boundary conditions} of the form  
\begin{equation}\tag{AIBVP$_2$}
\left\{
\begin{array}{rcll}
 \ddot{u}(t)&=& {A}u(t), &t\in{\mathbb R},\\
 \ddot{x}(t)&=& Bu(t)+ \tilde{B} x(t), &t\in {\mathbb R},\\
  x(t)&=&Lu(t), &t\in {\mathbb R},\\
  u(0)&=&f\in X,\qquad\;\;  \dot{u}(0)=g\in X,&\\
  x(0)&=&h\in\partial X,\qquad  \dot{x}(0)=j\in \partial X,&
\end{array}
\right.
\end{equation}
on a Banach \emph{state space} $X$, and a Banach {\emph boundary space} $\partial X$.

\bigskip
Our aim is to characterize the well-posedness of $\rm(AIBVP_2)$ -- or equivalently the property of cosine operator function generator of the operator matrix
\begin{equation*}
\mathcal{A}:=\begin{pmatrix}
A & 0\\
B & \tilde{B}
\end{pmatrix}
\end{equation*}
with non-diagonal domain
\begin{equation*}
D(\mathcal{A}):=\left\{ 
\begin{pmatrix}
u\\ x
\end{pmatrix}\in D(A)\times\partial X : Lu=x
\right\}
\end{equation*}
on the product space $X\times \partial X$ -- by means of properties of the operators $A,B,\tilde{B},L$ involved. As our main result we show that the well-posedness of $\rm(AIBVP_2)$ is equivalent to the well-posedness of a certain (possibly perturbed) second order problem on $X$ with homogeneous (e.g., in concrete applications, Dirichlet or Neumann) boundary conditions, cf. Theorems~\ref{main} and~\ref{main2} below and subsequent remarks.

\bigskip
The motivation for considering $\rm(AIBVP_2)$ is threefold. First, there is a deep relation between  wave equations equipped with second order dynamical boundary conditions and wave equations equipped with acoustic boundary conditions -- this has been convincingly shown in~\cite{[GGG03]}. In particular, showing well-posedness and compactness properties for one problem also yields analogous results for the other.

Further, if we combine the first and the third equations in $\rm(AIBVP_2)$ we obtain
\begin{equation}\label{mot}
\ddot{x}(t)=LAu(t),\qquad t\in\mathbb{R},
\end{equation}
provided that the second derivative with respect to time and the operator $L$ commute -- which in applications is often the case, at least for smooth $u$'s. Plugging~\eqref{mot} into the the second equation of $\rm(AIBVP_2)$ what we obtain is
\begin{equation*}
LAu(t)=Bu(t)+\tilde{B}Lu(t),\qquad t\in\mathbb{R},
\end{equation*}
an abstract version of the so-called generalized Wentzell (or Wentzell-Robin) boundary conditions for the operator $A$, cf.~\cite{[En03]}. Elliptic operators equipped with such kind of boundary conditions have received vivid interest in the last years -- for results in $L^p$-spaces cf.~\cite{[FGGR02]}, \cite{[AMPR03]}, \cite{[MR04]}, and~\cite{[VV03]}, where also an interesting probabilistic interpretation is given.

Finally, $\rm(AIBVP_2)$ is the natural second order version of the abstract initial-boundary value problem considered, e.g., in~\cite{[CENN03]} and~\cite{[KMN03]}. If we show that $\rm(AIBVP_2)$ is well-posed -- i.e., that $\mathcal A$ generates a cosine operator function --, then by~\cite[Thm.~3.14.17]{[ABHN01]} we also obtain that $\mathcal A$ generates an analytic semigroup of angle $\frac{\pi}{2}$. In particular, this yields well-posedness of the first order version of $\rm(AIBVP_2)$, i.e.,
\begin{equation}\tag{AIBVP}
\left\{
\begin{array}{rcll}
 \dot{u}(t)&=& {A}u(t), &t\geq 0,\\
 \dot{x}(t)&=& Bu(t)+ \tilde{B} x(t), &t\geq 0,\\
  x(t)&=&Lu(t), &t\geq 0.\\
  u(0)&=&f\in X,&\\
  x(0)&=&h\in \partial X.&\\
\end{array}
\right.
\end{equation}
While it is known that the product space $X\times \partial X$ is the right framework to discuss well-posedness of $\rm(AIBVP)$ (see~\cite{[CENN03]}), a surprising result of our paper is that the phase space associated to $\rm(AIBVP_2)$ need \emph{not} be a product space, cf. Section~5.

\bigskip
As for $(\rm{AIBVP})$ (see \cite{[CENN03]} and~\cite{[EF04]} for the cases of $L\notin{\mathcal L}(X,\partial X)$ and $L\in{\mathcal L}(X,\partial X)$, respectively), we need to distinguish three different cases: the so-called {\sl boundary operator} $L$ can be unbounded from some \emph{Kisy\'nski space} (see Definition~\ref{wellp} below) $Y$ to $\partial X$; unbounded from $X$ to $\partial X$ but bounded from $Y$ to $\partial X$; or bounded from $X$ to $\partial X$. In this paper \emph{we only consider the first two cases} in Sections~4 and 5, respectively. These occur, e.g., when we consider a wave equation on an $L^p$-space and $L$ is the normal derivative or the trace operator, respectively.

\bigskip
The third case (i.e., $L\in{\mathcal L}(X,\partial X)$) is typical for wave equations equipped with Wentzell boundary conditions on spaces where the point evaluation is a bounded operator. Among those who have already treated such problems in $C([0,1])$ we mention Favini, G.R. Goldstein, J.A. Goldstein, and Romanelli (\cite{[FGGR01]}), who considered plain Wentzell boundary conditions for the second derivative, and Xiao and Liang (\cite{[XL03]}), who treated {\sl generalized} Wentzell boundary conditions. Later, B\'atkai, Engel, and Haase extended the above results to second order problems involving (possibly degenerate) elliptic operators and (possibly {\sl non-local}) generalized Wentzell boundary conditions on $C[0,1]$ and $W^{1,1}(0,1)$ in~\cite{[BE04]} and~\cite{[BEH04]}, respectively. Finally, Engel (\cite[\S~5]{[En04]}) developed an abstract framework that includes all the above mentioned results as special cases.

\bigskip
While in this paper we focus mainly on the well-posedness of $(\rm{AIBVP}_2)$, in Section~6 we briefly discuss compactness, regularity and asymptotic issues. 

In a general Banach space setting, an efficient criterion to check the boundedness of a cosine operator function is still missing. However, if we \emph{assume} the cosine and sine operator function governing $(\rm{AIBVP}_2)$ to be bounded (i.e., if we assume the solutions to such a problem to be bounded in time), then we can make use of the spectral theory for operator matrices developed in~\cite{[En99]} and~\cite{[KMN03]}. In this way we are able to obtain sufficient conditions for the (almost) periodicity of solutions to $(\rm{AIBVP}_2)$.

\bigskip
As an application of our theory, in Section~7 we generalize some known results on a concrete second order problem. 

Lancaster, Shkalikov, and Ye (\cite[\S~5 and \S~7]{[LSY93]}) and later Gal, G.R. Goldstein and J.A. Goldstein (\cite{[GGG03]}), and Kramar, Nagel, and the author (\cite[Rem.~9.13]{[KMN03b]}), have already considered wave equations with second order dynamical boundary conditions in an $L^2$-setting, but used quite different methods. The following is a corollary of statements obtained in these papers, where all the proofs deeply rely on the Hilbert space setting. We quote it as a motivation for our investigations.

\begin{prop}\label{motivp}
The problem
\begin{equation}\label{motiv}
\left\{
\begin{array}{rcll}
 \ddot{u}(t,x)&=& u''(t,x), &t\in {\mathbb R},\; x\in(0,1),\\
 \ddot{u}(t,j)&=& (-1)^j u'(t,j)+\beta_j u(t,j), &t\in {\mathbb R},\; j=0,1,\\
  u(0,\cdot)&=&f,\qquad  \dot{u}(0,\cdot)=g,&
\end{array}
\right.
\end{equation}
admits for all $f,g\in H^2(0,1)$ and $\beta_0,\beta_1\in \mathbb C$ a unique classical solution $u$, continuously depending on the initial data. If $(\beta_0, \beta_1)\in{\mathbb R}_-^2 \setminus\{0,0\}$, then such a solution is uniformly bounded in time with respect to the $L^2$-norm.
\end{prop}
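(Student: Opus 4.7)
The plan is to recast~\eqref{motiv} as a concrete instance of $\rm(AIBVP_2)$ on the state space $X:=L^2(0,1)$ with boundary space $\partial X:=\mathbb{C}^2$, and then invoke the well-posedness theorem of Section~5. I would take $A:=d^2/dx^2$ with maximal domain $D(A):=H^2(0,1)$, let $L:D(A)\to\mathbb{C}^2$ be the trace operator $Lu:=(u(0),u(1))$, set $Bu:=(u'(0),-u'(1))$, and $\tilde B:=\operatorname{diag}(\beta_0,\beta_1)$. Since the trace is unbounded from $L^2(0,1)$ to $\mathbb{C}^2$ but bounded from $H^1(0,1)$ to $\mathbb{C}^2$, this situation falls into the second of the three cases outlined in the introduction, which is the setting of Theorem~\ref{main2}.

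To obtain the generation (and hence well-posedness) assertion I would verify the hypotheses of that theorem. It reduces the question for $\mathcal A$ to an analogous (possibly perturbed) second order problem on $X$ with \emph{homogeneous} boundary conditions --- concretely the Dirichlet Laplacian $A_0=d^2/dx^2$ on $D(A_0)=H^2(0,1)\cap H^1_0(0,1)$, whose generation of a cosine operator function on $L^2(0,1)$ with associated Kisy\'nski space $H^1_0(0,1)$ is classical. The continuity of the trace of the derivative from $H^2(0,1)$ to $\mathbb{C}^2$ and the finite-dimensionality of $\partial X$ should take care of the abstract assumptions on $B$ and $\tilde B$. The classical-solution part of the proposition then follows by applying the cosine and sine families generated by $\mathcal A$ to the lifted initial data $(f,Lf),(g,Lg)$, which lie in $D(\mathcal A)$ precisely because $f,g\in H^2(0,1)$.

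For the uniform boundedness in time when $(\beta_0,\beta_1)\in\mathbb{R}_-^2\setminus\{(0,0)\}$ I would exploit the Hilbert space structure. A direct integration by parts on $D(\mathcal A)$ shows that, with respect to the standard inner product on $X\oplus\partial X$,
\begin{equation*}
\left\langle \mathcal A\begin{pmatrix}u\\ Lu\end{pmatrix},\begin{pmatrix}u\\ Lu\end{pmatrix}\right\rangle \;=\; -\int_0^1 |u'(s)|^2\,ds \;+\; \sum_{j=0,1}\beta_j\,|u(j)|^2,
\end{equation*}
and that $\mathcal A$ is symmetric. Combined with the generation result from the previous step, this forces $\mathcal A$ to be self-adjoint; under the sign hypothesis the quadratic form above is non-positive, hence $\sigma(\mathcal A)\subseteq(-\infty,0]$. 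Since the resolvent of $\mathcal A$ is compact (using $H^2(0,1)\hookrightarrow L^2(0,1)$ and the finite-dimensionality of $\partial X$), and a short direct computation on constants and affine functions shows that $\ker\mathcal A=\{0\}$ whenever at least one $\beta_j$ is strictly negative, the spectrum is in fact contained in some $(-\infty,-\delta]$. The spectral calculus then yields uniform-in-$t$ bounds for both $\cos(t\sqrt{-\mathcal A})$ and $\sin(t\sqrt{-\mathcal A})/\sqrt{-\mathcal A}$, which translate directly into the desired $L^2$-bound on $u(t,\cdot)$.

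The step I expect to be most delicate is the clean verification of the abstract perturbation hypotheses of Theorem~\ref{main2} in this concrete situation; this hinges on correctly identifying the Kisy\'nski space associated with $\mathcal A$, which, as emphasized in the introduction, is not simply a product of subspaces of $X$ and $\partial X$, and which is also needed to know that $(g,Lg)$ belongs to the right intermediate space so that the sine family produces a $C^2$-solution.
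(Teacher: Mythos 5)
Your overall strategy is the one the paper itself uses when it later strengthens this proposition (Propositions~\ref{appl1} and~\ref{motivrev}): recast \eqref{motiv} as $(\mathcal{ACP}_2)$ with $X=L^2(0,1)$, $\partial X=\mathbb{C}^2$, $L$ the trace, $B u=(u'(0),-u'(1))$, $\tilde B=\operatorname{diag}(\beta_0,\beta_1)$, work in the Section~5 setting, and obtain the $L^2$-boundedness from symmetry/self-adjointness of $\mathcal A$ in the Hilbert case. Your quadratic-form computation, the injectivity of $\mathcal A$ when some $\beta_j<0$, and the spectral-calculus bound for $\cos(t\sqrt{-\mathcal A})$ and $(-\mathcal A)^{-1/2}\sin(t\sqrt{-\mathcal A})$ are all correct and essentially equivalent to the paper's appeal to Goldstein's lemma.

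The genuine gap is in the generation step, which you defer to ``verifying the hypotheses of Theorem~\ref{main2}'' and never carry out. Theorem~\ref{main2} does not reduce matters to the Dirichlet Laplacian $A_0$: its equivalent condition is that the \emph{perturbed} operator $A_0-D^A_\lambda B$ generate a cosine operator function with phase space $V\times X=H^1_0(0,1)\times L^2(0,1)$, and here $B\neq 0$. As Remark~\ref{remmain}(4) stresses, $D^A_\lambda B$ is \emph{not} bounded from $[D(A_0)]$ into the Kisy\'nski space $V$, so Lemma~\ref{pert} cannot absorb it; boundedness of $B$ on $[D(A)_L]$ and $\dim\partial X<\infty$ only give Assumptions~\ref{assbis}, not the perturbed generation. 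The paper closes exactly this gap with Corollary~\ref{rhandi} (Rhandi's Miyadera--Voigt-type theorem), verifying (D) through the explicit $\sinh$-formula for $D^A_\lambda$ (decay $O(|\lambda|^{-\epsilon})$, $\epsilon<1/(2p)$) and (R) through the d'Alembert formula, which yields $BS(t,A_0)f=(f(t),\,f(1-t))$ up to signs and hence the required $L^1$-estimate. Without this (or a substitute such as a direct form-method construction of $\mathcal A$ in $L^2$, which you do not give and which would still need a bounded-perturbation step to cover complex $\beta_j$), the well-posedness claim is unproved; moreover your self-adjointness of $\mathcal A$ is deduced \emph{from} that generation result, so the boundedness half of the statement inherits the same gap rather than repairing it.
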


Applying our technique, in Proposition~\ref{motivrev} we are able to show that~\eqref{motiv} is well-posed in a general $L^p$-setting, and to describe the associated phase space. Moreover, we show that the solution $u$ is a smooth function if $f,g$ are smooth, too. We also discuss (almost) periodicity in two special cases.

Further, as a second application we consider in Proposition~\ref{appl2} a wave equation equipped with acoustic-like boundary conditions on $L^2(\Omega)$, $\Omega\subset \mathbb{R}^n$.

\bigskip
We start in Section~2 by summarizing some results from the theory of cosine operator functions. Some of them are new and of independent interest. 

Our main technique will be based on operator matrices with coupled (i.e., non-diagonal) domain, which have already proved a powerful tool to tackle first order problems, cf.~\cite{[CENN03]}	. We recall a few main results from this theory in Section~3, and refer the reader to~\cite{[KMN03b]} for a more thorough introduction to this theory. 

\section{General results on cosine operator functions}

To keep the paper as self-contained as possible, we first recall the definition and some basic properties of a cosine operator function.

\begin{defi}\label{cof}
Let $E$ be a Banach space. A strongly continuous function 
$C:{\mathbb R}\to {\mathcal L}(E)$ is called a \emph{cosine operator function} if it satisfies the D'Alembert functional relations
$$\left\{
\begin{array}{rcl}
C(t+s)+C(t-s)&=&2C(t)C(s),\qquad t,s\in\mathbb R,\\
C(0)&=&I_E.
\end{array}
\right.$$
Further, the operator $K$ on $E$ defined by 
$$Kx:=\lim_{t\to 0}\frac{2}{t^2}(C(t)x-x),\qquad
D(K):=\left\{x\in E: \lim_{t\to 0}\frac{2}{t^2}(C(t)x-x)\;\hbox{exists}\right\},$$
is called the \emph{generator of} $(C(t))_{t\in\mathbb R}$, and we denote $C(t)=C(t,K)$, $t\in\mathbb R$. We define the associated \emph{sine operator function} $(S(t,K))_{t\in\mathbb R}$ by 
$$S(t,K)x:=\int_0^t C(s,K)x ds,\qquad t\in{\mathbb R},\; x\in E.$$
\end{defi}

The relation between well-posedness for second order abstract Cauchy problems and cosine operator functions is very close to that between first order abstract Cauchy problem and strongly continuous semigroups, as explained in the following, cf.~\cite[Thm.~3.14.11]{[ABHN01]}. 

\begin{lemma}\label{wellp2char}
Let $K$ be a closed\footnote{For a closed operator $K$ on a Banach space, $[D(K)]$ will denote throughout this paper the Banach space obtained by equipping its domain with the graph norm.} operator on a Banach space $E$. Then the operator $K$ generates a cosine operator function on $E$ if and only if there exists a Banach space $F$, with dense imbeddings $[D(K)]\hookrightarrow F\hookrightarrow E$, such that the operator matrix 
\begin{equation}\label{reduxm}
{\mathbf K}:=
\begin{pmatrix}
0 & I_F\\
K & 0
\end{pmatrix},\qquad D({\mathbf K}):=D(K)\times F,
\end{equation}
generates a strongly continuous semigroup $(e^{t\mathbf K})_{t\geq 0}$ in $F\times E$. In this case $F$ is uniquely determined and coincides with the space of strong differentiability of the operator valued mapping $C(\cdot,K):\mathbb{R}\to{\mathcal L}(E)$, and there holds
\begin{equation}\label{groupcos}
e^{t{\mathbf K}}=
\begin{pmatrix}
C(t,K) & S(t,K)\\
KS(t,K) & C(t,K)
\end{pmatrix},\qquad t\geq 0.
\end{equation}
\end{lemma}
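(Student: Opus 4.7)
The statement is classical (due essentially to Kisyński); I would split the proof into the two implications together with a uniqueness argument for $F$.

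For the ``only if'' direction, assume that $K$ generates a cosine operator function $(C(t,K))_{t\in\mathbb{R}}$ with associated sine operator function $(S(t,K))_{t\in\mathbb{R}}$. My plan is to define the candidate space $F$ as the \emph{Kisy\'nski space} of strong differentiability of $C(\cdot,K)$, i.e.
\begin{equation*}
F := \bigl\{ x \in E : C(\cdot,K)x \in C^1(\mathbb{R},E)\bigr\},
\end{equation*}
equipped with the norm $\|x\|_F := \|x\|_E + \sup_{0\le t\le 1}\|KS(t,K)x\|_E$ (one can check, using $\frac{d}{dt}C(t,K)x = KS(t,K)x$ on $F$ and a uniform boundedness argument on $[0,1]$, that this really is a norm turning $F$ into a Banach space). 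The standard properties $C(t,K)D(K)\subset D(K)$, $S(t,K)E\subset D(K)$ integrated against Laplace transforms, together with the density of $D(K)$ in $E$, yield that $[D(K)]\hookrightarrow F\hookrightarrow E$ continuously and densely.

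Next I would \emph{define} the candidate semigroup on $F\times E$ by the formula (\ref{groupcos}), i.e.
\begin{equation*}
T(t)\binom{x}{y} := \binom{C(t,K)x + S(t,K)y}{KS(t,K)x + C(t,K)y},\qquad t\ge 0,
\end{equation*}
where one first needs to check that $T(t)$ really maps $F\times E$ into itself; here the fact that $S(t,K)y\in F$ for $y\in E$ is the crucial point and follows from the representation $\frac{d}{dt}S(t,K)y = C(t,K)y$ on all of $E$ together with the definition of $F$. The semigroup law $T(t+s)=T(t)T(s)$ is then a direct consequence of D'Alembert's functional equation and the derived identities for $S(t,K)$ (such as $2S(t,K)C(s,K)=S(t+s,K)+S(t-s,K)$, $2KS(t,K)S(s,K)=C(t+s,K)-C(t-s,K)$), and strong continuity on $F\times E$ follows from the strong continuity of $C(\cdot,K)$ and $S(\cdot,K)$ together with the very definition of the norm on $F$. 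Finally, differentiating at $t=0$ and using the definitions of $K$ and $F$ one identifies the generator of $(T(t))_{t\ge 0}$ as precisely the operator matrix $\mathbf{K}$ with domain $D(K)\times F$.

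For the converse, assume $\mathbf{K}$ with domain $D(K)\times F$ generates a strongly continuous semigroup $(e^{t\mathbf{K}})_{t\ge 0}$ on $F\times E$, and write $e^{t\mathbf{K}}=\bigl(\begin{smallmatrix}U(t)&V(t)\\W(t)&Z(t)\end{smallmatrix}\bigr)$. Applying the semigroup to vectors of the form $\binom{x}{0}$ and $\binom{0}{y}$ and exploiting $\mathbf{K}\binom{x}{0}=\binom{0}{Kx}$ and $\mathbf{K}\binom{0}{y}=\binom{y}{0}$ (for $x\in D(K)$, $y\in F$) gives the operator differential equations $\dot U = V K$, $\dot V = Z$, $\dot W = \dot Z\cdot$(projection), from which one first reads off that $V(t)$ agrees with $\int_0^t U(s)\,ds$ on $F$ and then extends to $E$; setting $C(t,K):=U(t)|_E$ (extended from $F$ by density using the boundedness coming from the semigroup) and using the semigroup property $e^{(t+s)\mathbf{K}}=e^{t\mathbf{K}}e^{s\mathbf{K}}$ yields the D'Alembert relation for $C(\cdot,K)$, while computing $\lim_{t\to 0}\frac{2}{t^2}(C(t,K)x-x)$ via the Taylor expansion of the semigroup identifies its generator as $K$.

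Uniqueness of $F$ follows because once $K$ generates a cosine operator function, any space $F$ for which $\mathbf{K}$ is a semigroup generator must, by the previous step, satisfy $V(t)E\subset F$ with $V(t)=S(t,K)$; this forces $F$ to contain the strong-differentiability space, while the condition $\mathbf{K}(D(K)\times F)\subset F\times E$ together with the requirement that $(T(t))_{t\ge 0}$ be $F$-valued forces the reverse inclusion. The main technical obstacle is the careful bookkeeping around the Kisy\'nski space $F$: proving completeness of $\|\cdot\|_F$, invariance of $F$ under $C(t,K)$ and $S(t,K)$, and the correct identification of the domain $D(\mathbf{K})=D(K)\times F$ (as opposed to something larger or smaller); everything else is a fairly mechanical translation between the functional equation for $C(\cdot,K)$ and the semigroup law for $e^{t\mathbf{K}}$.
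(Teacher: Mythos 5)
The paper itself gives no proof of Lemma~\ref{wellp2char}: it is quoted from \cite[Thm.~3.14.11]{[ABHN01]} (Kisy\'nski's phase--space theorem), so your proposal must be judged on its own merits. Your forward direction is the standard construction and is fine as a sketch. In the converse direction, however, two steps are not right as written. First, you cannot define $C(t,K)$ by extending $U(t)$ from $F$ to $E$ ``by density using the boundedness coming from the semigroup'': the semigroup on $F\times E$ only yields $U(t)\in{\mathcal L}(F)$, i.e.\ an $F$-norm bound, and gives no estimate of the form $\Vert U(t)y\Vert_E\le c\Vert y\Vert_E$ for $y\in F$, which is what a density extension requires. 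The correct candidate is the lower right block $Z(t)\in{\mathcal L}(E)$, which coincides with $U(t)$ on $F$ (compare $\frac{d}{dt}V(t)y=U(t)y$, coming from $\frac{d}{dt}e^{t\mathbf K}z=e^{t\mathbf K}\mathbf{K}z$, with $\frac{d}{dt}V(t)y=Z(t)y$, coming from $\frac{d}{dt}e^{t\mathbf K}z=\mathbf{K}e^{t\mathbf K}z$, for $z=\binom{0}{y}$, $y\in F$). Second, the ``Taylor expansion'' argument only shows that $K$ is \emph{contained in} the generator of the resulting cosine function; equality needs an extra step (e.g.\ for large real $\lambda$ one reads off from $\lambda\in\rho(\mathbf K)$ that $\lambda^2-K$ is bijective, and $\lambda^2$ also lies in the resolvent set of the cosine generator, which extends $K$, so the two coincide). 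Likewise D'Alembert's equation does not drop out of block-multiplying $e^{(t+s)\mathbf K}=e^{t\mathbf K}e^{s\mathbf K}$; one normally obtains it from uniqueness of solutions of the second order problem, plus an extension to negative times (note $\mathbf K$ is similar to $-\mathbf K$ via ${\rm diag}(I,-I)$, so the semigroup is in fact a group). These points are repairable by standard arguments.

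The genuine gap is the uniqueness of $F$. Your inclusion $F\subseteq\{x\in E: C(\cdot,K)x\in C^1({\mathbb R},E)\}$ is fine: for $x\in F$ one has $\binom{0}{x}\in D(\mathbf K)$, hence the orbit is $C^1$ in $F\times E$ and its second component $C(\cdot,K)x$ is $C^1$ in $E$. But the reverse inclusion is \emph{not} forced by $S(t,K)E\subseteq F$ and $D(K)\subseteq F$, as you claim: $F$ is complete only in its own norm, so from these inclusions you can only reach elements that are limits in the $F$-norm. The natural approximants $\frac1t S(t,K)x$ do converge to $x$ in the norm of the strong-differentiability space whenever $C(\cdot,K)x\in C^1$, but there is no a priori reason for them to be Cauchy in the $F$-norm: the available estimate $\Vert\frac1t S(t,K)x-x\Vert_F\le ct\,\Vert Kx\Vert_E$ works only for $x\in D(K)$, and $\Vert\frac1t S(t,K)\Vert_{{\mathcal L}(E,F)}$ blows up like $1/t$, so no uniform-boundedness argument applies. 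This uniqueness assertion is exactly the nontrivial content of Kisy\'nski's theorem; in \cite{[ABHN01]} it is obtained via the Laplace transform/integrated semigroup machinery, and your sketch does not contain an idea that closes this step.
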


\begin{defi}\label{wellp}
If $K$ generates a cosine operator function on $E$, then the subspace $F$ of $E$ introduced in Lemma~\ref{wellp2char} is called \emph{Kisy\'nski space associated with $(C(t,K))_{t\in{\mathbb R}}$}. The product space ${\bf E}=F\times E$ is called \emph{phase space associated with} $(C(t,K))_{t\in{\mathbb R}}$.
\end{defi}

Taking into account Lemma~\ref{wellp2char} we can reformulate some known similarity and perturbation results semigroups in the context of cosine operator functions.

\begin{lemma}\label{simil}
Let $E_1, E_2, F_1, F_2$ be Banach spaces, with $F_1\hookrightarrow E_1$ and $F_2\hookrightarrow E_2$, and let $U$ be an isomorphism from $F_1$ onto $F_2$ and from $E_1$ onto $E_2$. Then an operator $K$ generates a cosine operator function with associated phase space $F_1\times E_1$ if and only if $UKU^{-1}$ generates a cosine operator function with associated phase space $F_2\times E_2$. In this case 
\begin{equation}\label{similrel}
U C(t,K) U^{-1}= C(t, UKU^{-1}),\qquad t\in\mathbb{R}.
\end{equation}
\end{lemma}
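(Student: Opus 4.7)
The plan is to reduce the claim to the analogous (and well-known) similarity statement for strongly continuous semigroups via the correspondence furnished by Lemma~\ref{wellp2char}. The key device is the diagonal block operator
\[
\mathcal{U}:=\begin{pmatrix} U & 0\\ 0 & U\end{pmatrix},
\]
which, by the two isomorphism assumptions on $U$, is an isomorphism from $F_1\times E_1$ onto $F_2\times E_2$.

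First I would write down the reduction matrix $\mathbf{K}_1$ associated with $K$ on $F_1\times E_1$ and the reduction matrix $\mathbf{K}_2$ associated with $UKU^{-1}$ on $F_2\times E_2$ as in~\eqref{reduxm}. Note that $D(UKU^{-1})=U\,D(K)$ by definition, so $\mathcal{U}$ maps $D(\mathbf{K}_1)=D(K)\times F_1$ bijectively onto $D(\mathbf{K}_2)=D(UKU^{-1})\times F_2$. A direct computation shows that on this domain
\[
\mathcal{U}\,\mathbf{K}_1\,\mathcal{U}^{-1}=\begin{pmatrix}0 & U I_{F_1} U^{-1}\\ UKU^{-1} & 0\end{pmatrix}=\begin{pmatrix}0 & I_{F_2}\\ UKU^{-1} & 0\end{pmatrix}=\mathbf{K}_2,
\]
because $UI_{F_1}U^{-1}=I_{F_2}$ as $U\colon F_1\to F_2$ is an isomorphism.

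Next, I invoke the standard similarity principle for generators of $C_0$-semigroups: since $\mathcal{U}$ is a Banach space isomorphism, $\mathbf{K}_1$ generates a $C_0$-semigroup on $F_1\times E_1$ if and only if $\mathbf{K}_2=\mathcal{U}\mathbf{K}_1\mathcal{U}^{-1}$ generates a $C_0$-semigroup on $F_2\times E_2$, and in that case
\[
e^{t\mathbf{K}_2}=\mathcal{U}\,e^{t\mathbf{K}_1}\,\mathcal{U}^{-1},\qquad t\geq 0.
\]
By Lemma~\ref{wellp2char} the left-hand side of this equivalence translates exactly into: $K$ generates a cosine operator function with phase space $F_1\times E_1$ iff $UKU^{-1}$ generates a cosine operator function with phase space $F_2\times E_2$. (Here I also use that the Kisy\'nski space is uniquely determined, so the assumption that $F_1$ and $F_2$ are the respective Kisy\'nski spaces is precisely what is being asserted.)

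Finally, inserting formula~\eqref{groupcos} for both semigroups into the intertwining identity and comparing the $(1,1)$-entries yields
\[
U\,C(t,K)\,U^{-1}=C(t,UKU^{-1}),\qquad t\in\mathbb{R},
\]
which is~\eqref{similrel}. The only genuinely delicate point is the bookkeeping that $\mathcal{U}$ really carries the domain $D(\mathbf{K}_1)$ onto $D(\mathbf{K}_2)$ and intertwines the two reduction matrices; once this is checked, everything else is a direct translation between the first-order and second-order pictures through Lemma~\ref{wellp2char}.
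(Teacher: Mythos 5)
Your proof is correct and follows essentially the same route as the paper: both conjugate the reduction matrix~\eqref{reduxm} by the diagonal isomorphism $\begin{pmatrix} U & 0\\ 0 & U\end{pmatrix}$, invoke the similarity principle for $C_0$-semigroup generators, translate back via Lemma~\ref{wellp2char}, and read off~\eqref{similrel} from~\eqref{groupcos}. Your additional bookkeeping that the conjugated domain is $D(UKU^{-1})\times F_2$ is a welcome explicit check that the paper leaves implicit.
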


\begin{proof} 
Since the operator matrix
$${\bf U}:= \begin{pmatrix}
U & 0\\
0 & U
\end{pmatrix}$$
is an isomorphism from ${\bf E}_1:=F_1\times E_1$ onto ${\bf E}_2:=F_2\times E_2$, it follows by similarity that the reduction matrix ${\bf K}$ defined in~\eqref{reduxm} generates a strongly continuous semigroup on ${\bf E}$ if and only if ${\bf UKU}^{-1}$ generates a strongly continuous semigroup $(e^{t{\bf UKU}^{-1}})_{t\geq 0}$  on ${\bf F}$. Now
$${\bf UKU}^{-1}=\begin{pmatrix}
0 & I_{F_2}\\
UKU^{-1} & 0
\end{pmatrix},$$
hence ${\bf UKU}^{-1}$ generates a strongly continuous semigroup on ${\bf E}_2$ if and only if $UKU^{-1}$ generates a cosine operator function with associated phase space $F_2\times E_2$. Finally,~\eqref{similrel} follows from~\eqref{groupcos} and the known relation 
$${\bf U}e^{t{\bf K}}{\bf U}^{-1} = e^{t{\bf UKU}^{-1}},\qquad t \geq 0,$$ 
cf.~\cite[\S~II.2.1]{[EN00]}.
\end{proof}

We will need the following perturbation lemma which improves a result due to Piskar{\"e}v and Shaw, cf. \cite[pag. 232]{[PS95]}.

\begin{lemma}\label{pert}
Let $K$ generate a cosine operator function with associated phase space $F\times E$, and let $J$ be a bounded operator from $[D(K)]$ to $F$. Then also $K+J$ generates a cosine operator function with associated phase space $F\times E$. 
\end{lemma}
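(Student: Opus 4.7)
The plan is to lift the statement to the phase space via Lemma~\ref{wellp2char}, reducing it to a $C_0$-semigroup perturbation problem, and then to invoke a perturbation theorem tailored to the matrix structure.

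First I would apply Lemma~\ref{wellp2char} in both directions. Since $K$ generates a cosine operator function with phase space $F \times E$, the reduction matrix $\mathbf{K}$, with off-diagonal entries $I_F$ and $K$ and domain $D(K) \times F$, generates a $C_0$-semigroup $(T(t))_{t\geq 0}$ on $\mathbf{E} := F \times E$. Because $D(K + J) = D(K)$, the analogous matrix $\mathbf{K}_J$ for $K + J$ has the same domain $D(\mathbf{K}) = D(K)\times F$, so that $\mathbf{K}_J = \mathbf{K} + \mathbf{J}$ with
$$\mathbf{J} := \begin{pmatrix} 0 & 0 \\ J & 0 \end{pmatrix}, \qquad D(\mathbf{J}) \supset D(\mathbf{K}).$$
The conclusion will then follow from a second (reverse) invocation of Lemma~\ref{wellp2char} once $\mathbf{K} + \mathbf{J}$ is shown to generate a $C_0$-semigroup on $\mathbf{E}$, the uniqueness clause of that lemma simultaneously identifying $F$ as the Kisy\'nski space associated with $K + J$.

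Next, I would record the two structural features of $\mathbf{J}$ on which the perturbation argument depends. The hypothesis $J \in \mathcal{L}([D(K)], F)$ yields the $\mathbf{K}$-boundedness estimate
$\|\mathbf{J}\binom{u}{v}\|_\mathbf{E} = \|Ju\|_E \leq c\,\|Ju\|_F \leq c\|J\|\,\|u\|_{[D(K)]}$.
More importantly, the range of $\mathbf{J}$ lies in the subspace $\{0\}\times F \subset \mathbf{E}$, and on this subspace the unperturbed semigroup is strongly regularizing: for every $w \in F$,
$$T(\tau)\begin{pmatrix} 0 \\ w\end{pmatrix} = \begin{pmatrix} S(\tau, K)w \\ C(\tau, K)w \end{pmatrix} \in D(K)\times F = D(\mathbf{K}),$$
with $[D(\mathbf{K})]$-norm locally uniformly bounded by a constant times $\|w\|_F$.

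With these observations in hand, I would apply a perturbation theorem for $C_0$-semigroups in the spirit of Desch--Schappacher (or a variant of Miyadera--Voigt), verifying that the Volterra operator $f \mapsto \int_0^{t_0} T(t_0 - s)\,\mathbf{J}\,f(s)\,ds$ is bounded from $L^p([0, t_0]; \mathbf{E})$ into $\mathbf{E}$ for some $p$ and small $t_0$. The hard part will be this admissibility estimate: pointwise, $\|\mathbf{J} T(s) x\|_\mathbf{E}$ for $x = \binom{u}{v} \in D(\mathbf{K})$ inherits a $\|Ku\|_E$-term that is not controlled by $\|x\|_\mathbf{E}$, so the naive Miyadera condition fails. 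However, integrating in time allows the identity $\int_0^\tau K C(s, K)\, u\,ds = KS(\tau, K)\, u$, together with the boundedness of $KS(\tau, K)\colon F \to F$ (which holds because $F$ is, by definition, the space of strong $F$-differentiability of $C(\cdot, K)$), to trade one derivative for one integration, converting $[D(K)]$-norm terms into $F$-norm terms after time-integration; this is precisely the extra flexibility afforded by the fact that $J$ lands in $F$ rather than in $E$. Once the perturbation-theorem hypothesis is established, $\mathbf{K} + \mathbf{J}$ generates a $C_0$-semigroup on $\mathbf{E}$, and lifting back via Lemma~\ref{wellp2char} completes the proof.
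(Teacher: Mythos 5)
Your reduction to the phase space is the same first step as the paper's, but you miss the decisive observation and the route you substitute does not close. The point is that $\mathbf{J}$ is a bounded operator \emph{on} the Banach space $[D(\mathbf{K})]=[D(K)]\times F$: its range $\{0\}\times F$ is already contained in $D(\mathbf{K})=D(K)\times F$ (you note that the range lies in $\{0\}\times F$, but you only use that the semigroup regularizes this subspace, not that it already sits inside the domain), and $\bigl\Vert\mathbf{J}\binom{u}{v}\bigr\Vert_{[D(\mathbf{K})]}=\Vert Ju\Vert_F\le\Vert J\Vert\,\Vert u\Vert_{[D(K)]}$. For perturbations in $\mathcal{L}([D(\mathbf{K})])$ the Desch--Schappacher theorem cited in the paper applies directly, with no admissibility or Volterra-type estimate to verify; that one line is essentially the whole proof.

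The admissibility argument you attempt instead has genuine gaps. First, the operator $f\mapsto\int_0^{t_0}T(t_0-s)\mathbf{J}f(s)\,ds$ on $L^p([0,t_0];\mathbf{E})$ is not even defined, since $\mathbf{J}$ only acts on vectors whose first component lies in $D(K)$; and the Miyadera--Voigt condition $\int_0^{t_0}\Vert\mathbf{J}T(s)x\Vert_{\mathbf{E}}\,ds\le q\Vert x\Vert_{\mathbf{E}}$ fails, as you concede, because of the term $\Vert C(s,K)Ku\Vert_E$ -- and it cannot be repaired by moving the integral inside the norm, since the condition has the norm under the integral, so the identity $\int_0^\tau KC(s,K)u\,ds=KS(\tau,K)u$ is of no use there. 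Second, the claim that $KS(\tau,K)$ is bounded from $F$ to $F$ is false in general: invariance of $D(\mathbf{K})$ under the semigroup only yields $C(\tau,K)F\subset F$ and $S(\tau,K)F\subset D(K)$, while e.g. for the Dirichlet Laplacian on $L^2(\Omega)$ with $F=H^1_0(\Omega)$ one has $KS(\tau,K)w\notin F$ for generic $w\in F$ (spectrally, $\sum_n\lambda_n^3\sin^2(\tau\sqrt{\lambda_n})\vert w_n\vert^2$ need not converge when only $\sum_n\lambda_n\vert w_n\vert^2<\infty$). So the crucial estimate in your plan is unsupported, whereas the correct argument needs none of it.
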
 

\begin{proof}
The operator matrix ${\bf K}$ defined in~\eqref{reduxm} generates a strongly continuous semigroup on $F\times E$. Its perturbation 
$${\bf J}:=\begin{pmatrix}
0 & 0\\
J & 0
\end{pmatrix}$$
is a bounded operator on the Banach space $[D({\bf K})]=[D(K)]\times F$. By a well-known perturbation result due to Desch and Schappacher (see~\cite{[DS84]}), also their sum 
$${\bf K+J}=\begin{pmatrix}
0 & I_{E_1}\\
K+J & 0
\end{pmatrix}$$
generates a strongly continuous semigroup on $F\times E$, that is, $K+J$ generates a cosine operator function with associated phase space $F\times E$.
\end{proof}

\begin{rem}\label{damp}
With a proof that is analogous to that of Lemma~\ref{pert} we also obtain that the reduction matrix
$$\begin{pmatrix}
0 & I_{F}\\
K & H
\end{pmatrix}$$
generates a strongly continuous semigroup on $F\times E$, provided that $K$ generates a cosine operator function with associated space $F\times E$, and that $H\in{\mathcal L}(F)$. This is equivalent to saying that the initial value problem associated with 
$$\ddot{u}(t)=Ku(t)+H\dot{u}(t),\qquad t\in{\mathbb R},$$
is well-posed. In particular, the unboundedness of the damping term $H$ does not prevent backward solvability of the equation.\end{rem}

\section{General framework and basic results}

Inspired by \cite{[CENN03]} and \cite{[KMN03]}, we impose the following
assumptions throughout this paper.

\begin{assum}\label{ass}
{\hskip0em
\begin{enumerate}
\item $X$, $Y$, and ${\partial X}$ are Banach spaces
such that $Y\hookrightarrow X$.
\item ${A}:D({A})\to X$ is linear, with
$D(A)\subset Y$.
\item $L:D(A)\to {\partial X}$ is linear and surjective.
\item ${A}_0:={A}\arrowvert_{\ker(L)}$ is closed, densely
defined, and has nonempty resolvent set.
\item $\begin{pmatrix}A\\L\end{pmatrix}:D(A)\to X\times \partial X$ is closed.
\end{enumerate}}
\end{assum}

The Assumptions~\ref{ass} allow us to state a slight modification of a result due to Greiner, cf. \cite[Lemma~2.3]{[CENN03]} and \cite[Lemma~3.2]{[Mu04]}. 

\begin{lemma}\label{lemmamu}
Let $\lambda\in\rho(A_0)$. Then the restriction $L\big\arrowvert_{\ker(\lambda-{A})}$ has an inverse
$$D^A_\lambda :\partial X\to\ker(\lambda-{A}),$$
called Dirichlet operator associated with $A$. Moreover, $D^A_\lambda$ is bounded from ${\partial X}$ to $Z$, for every Banach space $Z$ satisfying $D(A^\infty)\subset Z\hookrightarrow X$. In particular\footnote{By assumption, $\begin{pmatrix}A\\L\end{pmatrix}$ is a closed operator, thus its domain $D(A)$ endowed with the graph norm becomes a Banach space. We denote it by $[D(A)_L]$.}, 
$D^A_\lambda\in{\mathcal L}(\partial X,[D(A)_L])$ and $D^A_\lambda\in{\mathcal L}(\partial X,Y)$.
\end{lemma}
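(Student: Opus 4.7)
The plan is a clean application of the open mapping and closed graph theorems, preceded by a standard ``Greiner trick'' to produce the operator.

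First I would verify that $L\big\arrowvert_{\ker(\lambda-A)}$ is bijective. For injectivity, any $u\in\ker(\lambda-A)\cap\ker L$ lies in $D(A_0)$ and satisfies $(\lambda-A_0)u=0$, hence $u=0$ because $\lambda\in\rho(A_0)$. For surjectivity, given $x\in\partial X$ I use the surjectivity of $L$ to pick $v\in D(A)$ with $Lv=x$, then set
\[
w:=(\lambda-A_0)^{-1}(\lambda-A)v\in D(A_0)\subset\ker(L),\qquad u:=v-w.
\]
A direct computation shows $(\lambda-A)u=0$ and $Lu=Lv-0=x$, so $u$ is the desired preimage. The inverse $D^A_\lambda$ is then well defined on $\partial X$.

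Next I would prove that $D^A_\lambda\in\mathcal{L}(\partial X,[D(A)_L])$ by the open mapping theorem. Since by hypothesis $\binom{A}{L}$ is closed, $[D(A)_L]$ is a Banach space, and both $A$ and $L$ are bounded from $[D(A)_L]$ into $X$ and $\partial X$, respectively. In particular $\ker(\lambda-A)$ is closed in $[D(A)_L]$, and the restriction $L\big\arrowvert_{\ker(\lambda-A)}$ is a continuous bijection between Banach spaces. The open mapping theorem then yields boundedness of its inverse $D^A_\lambda$.

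For the general statement, let $Z$ be any Banach space with $D(A^\infty)\subset Z\hookrightarrow X$. I first note that $\mathrm{range}(D^A_\lambda)=\ker(\lambda-A)\subset D(A^\infty)$: indeed, if $Au=\lambda u$, then inductively $A^n u=\lambda^n u\in D(A)$, so $u\in D(A^\infty)\subset Z$. To promote this to boundedness, I apply the closed graph theorem to $D^A_\lambda:\partial X\to Z$: if $x_n\to x$ in $\partial X$ and $D^A_\lambda x_n\to v$ in $Z$, then the embedding $Z\hookrightarrow X$ gives convergence in $X$, while the previous step combined with $[D(A)_L]\hookrightarrow X$ gives $D^A_\lambda x_n\to D^A_\lambda x$ in $X$; hence $v=D^A_\lambda x$, the graph is closed, and $D^A_\lambda\in\mathcal{L}(\partial X,Z)$. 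The specific choices $Z=[D(A)_L]$ (already established) and $Z=Y$ (valid since $D(A)\subset Y\hookrightarrow X$) give the final two assertions.

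The argument is essentially routine; the only point requiring any care is the bookkeeping of Banach space structures, namely recognizing that the closedness hypothesis on $\binom{A}{L}$ (rather than on $A$ alone) is exactly what provides the ambient Banach space $[D(A)_L]$ in which $\ker(\lambda-A)$ is automatically closed, so that open mapping applies without extra assumptions.
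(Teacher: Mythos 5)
Your argument is correct and is essentially the standard Greiner-type proof that the paper itself does not spell out but delegates to the cited references: bijectivity of $L\arrowvert_{\ker(\lambda-A)}$ via the splitting $u=v-(\lambda-A_0)^{-1}(\lambda-A)v$, boundedness into $[D(A)_L]$ by the open mapping theorem, and the transfer to any $Z$ with $D(A^\infty)\subset Z\hookrightarrow X$ (in particular $Z=Y$) by the closed graph theorem using $\ker(\lambda-A)\subset D(A^\infty)$. No gaps; this matches the intended proof.
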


By the following we precise what kind of {\sl feedback} operators $B$ and $\tilde B$ we allow.

\begin{assum}\label{assbis}
{\hskip0em
\begin{enumerate}
\item $B:[D(A)_L]\to\partial X$ is linear and bounded. 
\item $\tilde{B}:{\partial X}\to{\partial X}$ is linear and bounded.
\end{enumerate}}
\end{assum}

Observe that Assumption~\ref{assbis}.(1) implies that $B$ is relatively $A_0$-bounded. Moreover, by Lemma~\ref{lemmamu} we obtain the following.  

\begin{lemma}\label{lemmino}
Let $\lambda\in\rho(A_0)$. Then the operator $BD^A_\lambda$ is bounded on $\partial X$ and the operator $D^A_\lambda B$ is bounded from $[D(A_0)]$ to $Y$.
\end{lemma}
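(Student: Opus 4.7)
The plan is to prove both assertions by recognizing them as compositions of operators that are already known to be bounded from Lemma~\ref{lemmamu} and Assumption~\ref{assbis}.

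For the first assertion, I would simply chain the maps: Lemma~\ref{lemmamu} gives $D^A_\lambda\in\mathcal{L}(\partial X, [D(A)_L])$, while Assumption~\ref{assbis}.(1) gives $B\in\mathcal{L}([D(A)_L],\partial X)$. Composing the two shows $BD^A_\lambda\in\mathcal{L}(\partial X)$.

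For the second assertion, the subtle point is that $B$ is only assumed bounded from $[D(A)_L]$ (with the full graph norm $\|u\|+\|Au\|+\|Lu\|$) to $\partial X$, whereas we want to apply it to elements of $D(A_0)$. The key observation is that $D(A_0)=\ker(L)\cap D(A)\subset D(A)$, and for $u\in D(A_0)$ one has $Lu=0$, so the graph norm of $[D(A)_L]$ and the graph norm of $[D(A_0)]$ coincide on this subspace. Hence the restriction $B\big|_{D(A_0)}$ is bounded from $[D(A_0)]$ to $\partial X$. Combining this with the bound $D^A_\lambda\in\mathcal{L}(\partial X,Y)$ from Lemma~\ref{lemmamu} yields $D^A_\lambda B\in\mathcal{L}([D(A_0)],Y)$.

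There is no real obstacle here; the whole argument is essentially a bookkeeping exercise with domains and graph norms. The only point worth highlighting, and indeed the only step that uses any content beyond the definitions, is the identification of the restriction of the $[D(A)_L]$-norm to $\ker(L)$ with the graph norm of $A_0$. Once this is noted, both statements follow by two-line compositions of bounded maps.
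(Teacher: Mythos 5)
Your argument is correct and is exactly the reasoning the paper leaves implicit (it states the lemma as an immediate consequence of Lemma~\ref{lemmamu} and Assumption~\ref{assbis}, having just remarked that $B$ is relatively $A_0$-bounded, which is precisely your graph-norm observation on $\ker(L)$). Both compositions are handled as in the paper, so nothing further is needed.
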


For $\lambda\in\rho(A_0)$ we denote in the following by $B_\lambda$ the operator 
$$B_\lambda:=\tilde{B}+BD^A_\lambda,$$ 
which by the above result is bounded on $\partial X$.

\bigskip
To start our investigations on $\rm(AIBVP_2)$, we re-write such a problem as a more usual second order abstract Cauchy problem
\begin{equation}\tag{$\mathcal{ACP}_2$}
\left\{
\begin{array}{rcll}
 \ddot{\mathfrak u}(t)&=& \mathcal{A}\mathfrak{u}(t), &t\in{\mathbb R},\\
  {\mathfrak u}(0)&=&\mathfrak{f},\qquad  \dot{\mathfrak u}(0)=\mathfrak{g},&
\end{array}
\right.
\end{equation}
where
\begin{equation}\label{acorsivo}
\mathcal{A}:=\begin{pmatrix}
A & 0\\
B & \tilde{B}
\end{pmatrix},\qquad D(\mathcal{A}):=\left\{ 
\begin{pmatrix}
u\\ x
\end{pmatrix}\in D(A)\times\partial X : Lu=x
\right\},
\end{equation}
is an operator matrix with non-diagonal domain on the product space 
$$\mathcal{X}:=X\times {\partial X}.$$
Here 
\begin{equation}\label{initdata}
\mathfrak{u}(t):=\begin{pmatrix}
u(t)\\ Lu(t)
\end{pmatrix}\quad\quad\hbox{for}\quad t\in{\mathbb R},\qquad\quad 
\mathfrak{f}:=\begin{pmatrix}
f\\ h
\end{pmatrix},\quad 
\mathfrak{g}:=\begin{pmatrix}
g\\ j
\end{pmatrix}.
\end{equation}

\bigskip
We are interested in well-posedness of $({\rm AIBVP}_2)$ in the following sense.

\begin{defi}\label{defiaibvp2}
A \emph{classical solution to} $({\rm AIBVP}_2)$ is a function $u$ such that
\begin{itemize}
\item $u(\cdot)\in C^2({\mathbb R},X)\cap C^1({\mathbb R},Y)$,
\item $u(t)\in D(A)$ for all $t\in {\mathbb R}$, 
\item $Lu(\cdot)\in C^2({\mathbb R},{\partial X})$, and
\item $u(\cdot)$ satisfies $({\rm AIBVP}_2)$.
\end{itemize}
The problem $(\rm{AIBVP}_2)$ is called \emph{well-posed} if it admits a unique classical solution $u$ for all initial data $f\in D(A)$, $g\in Y$, and $h,j\in \partial X$ satisfying the compatibility condition $Lf=h$, and if the dependence of $u$ on $f,g,h,j$ is continuous.
\end{defi}

\begin{rem}\label{wellpo}
One can easily check that $({\rm AIBVP}_2)$ is well-posed if and only if $(\mathcal{ACP}_2)$ is well-posed. Thus, by Lemma~\ref{wellp2char} the issue promptly becomes to investigate the operator matrix ${\mathcal A}$ and, in particular, to decide whether it generates a cosine operator function on ${\mathcal X}$, and what is the associated Kisy\'nski space. In fact, let in this case  $\mathfrak{f}$ and $ \mathfrak g$ lie in the domain of $\mathcal A$ and in the associated Kisy\'nski space, respectively. Then, it follows by~\eqref{groupcos} that the unique classical solution to $({\mathcal{ACP}}_2)$ (resp., to $(\rm{AIBVP}_2)$) is given by
\begin{equation}\label{cosres}
\mathfrak{u}(t)=C(t,\mathcal{A})\mathfrak{f} + S(t,\mathcal{A})\mathfrak{g},\qquad t\in\mathbb{R},
\end{equation}
(resp., by the first coordinate of $\mathfrak u$). 

Observe finally that if $\mathfrak f$ defined in~\eqref{initdata} is in $D({\mathcal A})$, then the compatibility condition $Lf=h$ holds.
\end{rem}

\bigskip
Our main Assumptions~\ref{ass} and \ref{assbis} are similar to those imposed in~\cite{[CENN03]} and \cite{[KMN03]} to treat first order problems. The main result obtained in \cite{[KMN03]} was the following. We sketch its proof as a hint for the subsequent investigations.

\begin{lemma}\label{lemmakmn} 
The operator matrix ${\mathcal A}$ defined in~\eqref{acorsivo} generates a strongly continuous semigroup on $\mathcal X$ if and only if the operator $A_0-D^A_\lambda B$ generates a strongly continuous semigroup on $X$ for some $\lambda\in\rho(A_0)$.
\end{lemma}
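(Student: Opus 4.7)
The plan is to apply a bounded similarity transformation that decouples the non-product domain of $\mathcal A$, peel off a genuinely bounded perturbation, and then reduce the generation of the leftover lower-triangular matrix to that of $A_0-D^A_\lambda B$.

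Fix $\lambda\in\rho(A_0)$. Thanks to Lemma~\ref{lemmamu} the operator
\[
\mathcal{U}_\lambda := \begin{pmatrix} I_X & -D^A_\lambda \\ 0 & I_{\partial X} \end{pmatrix}
\]
lies in $\mathcal L(\mathcal X)$ and is an isomorphism with inverse $\begin{pmatrix} I_X & D^A_\lambda \\ 0 & I_{\partial X} \end{pmatrix}$. Since $LD^A_\lambda=I_{\partial X}$, a quick check shows that $\mathcal{U}_\lambda$ maps $D(\mathcal A)$ bijectively onto the \emph{product} space $D(A_0)\times \partial X$. Exploiting $AD^A_\lambda=\lambda D^A_\lambda$ and $B_\lambda=\tilde B+BD^A_\lambda$, a direct computation then yields
\[
\mathcal{U}_\lambda\,\mathcal{A}\,\mathcal{U}_\lambda^{-1}=\begin{pmatrix}A_0-D^A_\lambda B & D^A_\lambda(\lambda-B_\lambda)\\ B & B_\lambda\end{pmatrix}\quad\text{on } D(A_0)\times \partial X.
\]
I split this as $\mathcal M+\mathcal P$ with $\mathcal M:=\begin{pmatrix}A_0-D^A_\lambda B & 0\\ B & 0\end{pmatrix}$ and $\mathcal P:=\begin{pmatrix}0 & D^A_\lambda(\lambda-B_\lambda)\\ 0 & B_\lambda\end{pmatrix}$; the matrix $\mathcal P$ belongs to $\mathcal L(\mathcal X)$ by Lemmas~\ref{lemmamu} and~\ref{lemmino} together with Assumption~\ref{assbis}.(2). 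By the similarity invariance of generation and the bounded perturbation theorem for $C_0$-semigroups, it suffices to show that $\mathcal M$ is a generator iff $K:=A_0-D^A_\lambda B$ is.

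For this last equivalence, assume first that $K$ generates a $C_0$-semigroup $(T(t))_{t\geq 0}$ on $X$. The candidate
\[
\mathcal T(t)\begin{pmatrix}u\\x\end{pmatrix}:=\begin{pmatrix}T(t)u\\ x+B\int_0^t T(s)u\,ds\end{pmatrix},\qquad t\geq 0,
\]
makes sense because the identity $K\int_0^t T(s)u\,ds=T(t)u-u$ ensures $\int_0^t T(s)u\,ds\in D(K)=D(A_0)$ for \emph{every} $u\in X$, so that $B$ can be applied by Assumption~\ref{assbis}.(1). A routine verification of the semigroup law, the strong continuity, and the identification of the generator's domain establishes that $(\mathcal T(t))_{t\geq 0}$ is $C_0$ on $\mathcal X$ with generator $\mathcal M$. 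Conversely, if $\mathcal M$ generates, the closed subspace $\{0\}\times\partial X\subset D(\mathcal M)$ is sent to zero by $\mathcal M$ and hence is invariant under its semigroup (as is visible on the resolvent formula for $\mathcal M$); the induced quotient semigroup lives on $\mathcal X/(\{0\}\times\partial X)\cong X$ and has generator exactly $K$.

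The main technical obstacle lies in the explicit construction above: one must verify that $t\mapsto B\int_0^t T(s)u\,ds$ is strongly continuous in $\partial X$ for every $u\in X$ (and not merely $u\in D(K)$), together with the identification of the generator's domain as $D(A_0)\times\partial X$. Both points follow from the continuity of $t\mapsto \int_0^t T(s)u\,ds$ in the graph norm of $K$, itself an immediate consequence of the identity just recalled.
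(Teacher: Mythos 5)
Your proof is correct, and its second half takes a genuinely different route from the paper's. The first step is identical: your $\mathcal U_\lambda$ is exactly the matrix $\mathcal L_\lambda$ from~\eqref{fac}, your computation that it maps $D(\mathcal A)$ onto $D(A_0)\times\partial X$ is the content of the factorisation there, and your transformed operator is precisely $\mathcal L_\lambda\mathcal A_\lambda+\lambda$ from~\eqref{fac2}. From that point on the paper splits off the entry $B$ as a second perturbation, bounded on $[D(A_0)]\times\partial X$, and invokes the Desch--Schappacher theorem to reduce to the diagonal matrix with entries $A_0-D^A_\lambda B$ and $0$; you instead keep the lower-triangular matrix $\mathcal M=\begin{pmatrix} K & 0\\ B & 0\end{pmatrix}$, $K=A_0-D^A_\lambda B$, and prove the equivalence directly: an explicit semigroup formula $\mathcal T(t)\binom{u}{x}=\binom{T(t)u}{x+B\int_0^t T(s)u\,ds}$ for the forward implication, and invariance of $\{0\}\times\partial X$ (indeed $R(\mu,\mathcal M)\binom{0}{g}=\binom{0}{\mu^{-1}g}$, so the resolvent, hence the semigroup, fixes this subspace) together with the quotient semigroup on $\mathcal X/(\{0\}\times\partial X)\cong X$, whose generator is $K$, for the converse. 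What each approach buys: yours is more elementary and self-contained (no relatively bounded perturbation theorem needed) and produces an explicit representation of the semigroup of $\mathcal M$, which also makes the block-triangular structure of $e^{t\mathcal A}$ visible; the paper's version is shorter once Desch--Schappacher is available and, more importantly, is phrased so that the very same decomposition can be re-used verbatim for cosine functions via Lemma~\ref{pert} in Theorems~\ref{main} and~\ref{main2}, which is the real purpose of Lemma~\ref{lemmakmn} in this paper. One point you should make explicit: applying $B$ to $\int_0^t T(s)u\,ds$ continuously uses $B\in\mathcal L([D(K)],\partial X)$, whereas Assumption~\ref{assbis}.(1) only gives boundedness for the $A_0$-graph norm on $D(A_0)=D(K)$; since $K$ is closed (being a generator) and $A_0$-bounded, the two graph norms are equivalent by the bounded inverse theorem, so the gap is only cosmetic, but it deserves a line.
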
 

\begin{proof}
The main idea of the proof is that under our assumptions the factorisation
\begin{equation}\label{fac}
{\mathcal A}-\lambda={\mathcal A}_\lambda {\mathcal L}_\lambda:=
\begin{pmatrix}
A_0-\lambda & 0\\
B & B_\lambda-\lambda
\end{pmatrix}
\begin{pmatrix}
I_X & -D^A_\lambda\\
0 & I_{\partial X}
\end{pmatrix}
\end{equation}
holds for all $\lambda\in\rho(A_0)$, cf.~\cite[Lemma~4.2]{[KMN03]}. Since the operator matrix ${\mathcal L}_\lambda$ is an isomorphism on $X\times \partial X$, we obtain by similarity that ${\mathcal A}-\lambda$, and hence $ \mathcal A$ are generators on ${\mathcal X}$ if and only if
\begin{equation}\label{fac2}
{\mathcal L}_\lambda{\mathcal A}_\lambda=
\begin{pmatrix}
A_0-D^A_\lambda B & 0\\
0 & 0\\
\end{pmatrix}+
\begin{pmatrix}
0 & 0\\
B & 0\\
\end{pmatrix}+
\begin{pmatrix}
-\lambda & D^A_\lambda(\lambda-B_\lambda)\\
0 & B_\lambda-\lambda
\end{pmatrix}
\end{equation}
with diagonal domain $D({\mathcal L}_\lambda{\mathcal A}_\lambda)=D(A_0)\times \partial X$ is a generator on $\mathcal X$. Since $B$ is relatively $A_0$-bounded, the second operator on the right-hand side is bounded on $[D({\mathcal L}_\lambda{\mathcal A}_\lambda)]=[D(A_0)]\times \partial X$, and the third one is bounded on $\mathcal X$ as a direct consequence of Lemma~\ref{lemmamu} and ~\ref{lemmino}. Taking into account the already mentioned
perturbation result due to Desch and Schappacher (see~\cite{[DS84]}) the claim follows.
\end{proof}

\section{The case $L\not\in{\mathcal L}(Y,\partial X)$}

If $A_0$ generates a cosine operator function with associated phase space $Y\times X$, then it is intuitive to consider the product space 
$${\mathcal Y}:=Y\times \partial X$$
as a candidate Kisy\'nski space for $({\mathcal{ACP}}_2)$. This intuition is partly correct, as we show in this and the next section. 

\bigskip
We can mimic the proof of Lemma~\ref{lemmakmn} and obtain the following.

\begin{theo}\label{main} 
The operator matrix $\mathcal{A}$ generates a cosine operator function with associated phase space
$\mathcal{Y}\times\mathcal{X}$ if and only if ${A}_0$ generates a cosine operator function with associated phase space $Y\times X$.
\end{theo}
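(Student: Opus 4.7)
The plan is to mimic the proof of Lemma~\ref{lemmakmn}, lifting every step from the context of strongly continuous semigroups to that of cosine operator functions: the Desch--Schappacher semigroup perturbation result is replaced by Lemma~\ref{pert}, and ordinary semigroup similarity is replaced by Lemma~\ref{simil}. Fix $\lambda\in\rho(A_0)$. Since by Lemma~\ref{lemmamu} the Dirichlet operator $D^A_\lambda$ is bounded from $\partial X$ into both $X$ and $Y$, the matrix $\mathcal{L}_\lambda$ appearing in~\eqref{fac} is simultaneously an isomorphism on $\mathcal{X}=X\times\partial X$ and on $\mathcal{Y}=Y\times\partial X$. Lemma~\ref{simil} therefore reduces the question of whether $\mathcal{A}$ generates a cosine operator function with phase space $\mathcal{Y}\times\mathcal{X}$ to the same question for $\mathcal{L}_\lambda\mathcal{A}_\lambda=\mathcal{L}_\lambda(\mathcal{A}-\lambda)\mathcal{L}_\lambda^{-1}$ (modulo the harmless bounded summand $\lambda I$).

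Next, I exploit the decomposition~\eqref{fac2} of $\mathcal{L}_\lambda\mathcal{A}_\lambda$ on the domain $D(A_0)\times\partial X$. The principal diagonal operator $\mathrm{diag}(A_0-D^A_\lambda B,\,0)$ generates a cosine operator function on $\mathcal{X}$ with Kisy\'nski space $\mathcal{Y}$ if and only if $A_0-D^A_\lambda B$ generates one on $X$ with Kisy\'nski space $Y$, since the zero block on $\partial X$ trivially generates a cosine operator function with phase space $\partial X\times\partial X$ and diagonal problems decouple into their coordinates. From there, using that $D^A_\lambda B\in\mathcal{L}([D(A_0)],Y)$ by Lemma~\ref{lemmino}, one further application of Lemma~\ref{pert} interchanges $A_0-D^A_\lambda B$ with $A_0$, yielding the stated equivalence.

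The two off-diagonal summands in~\eqref{fac2} must be absorbed as cosine perturbations, which by Lemma~\ref{pert} requires boundedness from $[D(A_0)]\times\partial X$ into $\mathcal{Y}=Y\times\partial X$ -- a strictly stronger requirement than boundedness into $\mathcal{X}$. I verify each entry separately: $B\colon[D(A_0)]\to\partial X$ is bounded since $[D(A_0)]\hookrightarrow[D(A)_L]$ and Assumption~\ref{assbis}.(1) applies; the scalar $-\lambda I_X\colon[D(A_0)]\to Y$ is bounded because the hypothesis of cosine generation for $A_0$ forces $[D(A_0)]\hookrightarrow Y$ via Lemma~\ref{wellp2char}; $D^A_\lambda(\lambda-B_\lambda)\in\mathcal{L}(\partial X,Y)$ by Lemmas~\ref{lemmamu} and~\ref{lemmino}; and $B_\lambda-\lambda\in\mathcal{L}(\partial X)$.

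The main obstacle I foresee is precisely this regularity upgrade: Lemma~\ref{pert} is more demanding than its first order Desch--Schappacher analogue because the perturbation must take values in the Kisy\'nski space rather than merely in the state space, and the Kisy\'nski space is genuinely smaller whenever $L\notin\mathcal{L}(Y,\partial X)$. The saving grace is that the Dirichlet operator produced by Lemma~\ref{lemmamu} automatically maps $\partial X$ into $Y$, so the off-diagonal perturbations in~\eqref{fac2} inherit the extra regularity needed to feed Lemma~\ref{pert}. Once this bookkeeping is in place, both implications of the theorem follow by running the chain of equivalences (similarity, perturbation on the diagonal, perturbation across the decomposition, similarity again) either forwards or backwards.
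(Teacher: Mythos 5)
Your proposal is correct and follows essentially the same route as the paper's proof: similarity via $\mathcal{L}_\lambda$ (an isomorphism on both $\mathcal{X}$ and $\mathcal{Y}$ by Lemma~\ref{lemmamu}) together with Lemma~\ref{simil}, then removal of the remaining summands in~\eqref{fac2} by Lemma~\ref{pert}, and finally the interchange of $A_0-D^A_\lambda B$ with $A_0$ via Lemmas~\ref{pert} and~\ref{lemmino}. The only cosmetic difference is that you absorb the third summand by Lemma~\ref{pert} (boundedness into $\mathcal{Y}$) where the paper invokes the bounded perturbation theorem on $\mathcal{X}$, which changes nothing.
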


\begin{proof}
Let $\lambda\in\rho(A_0)$. Then the operator matrix $\mathcal{A}-\lambda$ is similar to the operator matrix ${\mathcal L}_\lambda{\mathcal A}_\lambda$ defined in~\eqref{fac2}. The similarity transformation is performed by the matrix ${\mathcal L}_\lambda$ introduced in~\eqref{fac}, which is not only an isomorphism on ${\mathcal X}$, but also, by Lemma~\ref{lemmamu}, on~${\mathcal Y}$. Thus, by Lemma~\ref{simil}, ${\mathcal A}$ generates a cosine operator function with associated phase space ${\mathcal Y}\times {\mathcal X}$ if and only if the similar operator ${\mathcal L}_\lambda{\mathcal A}_\lambda$ generates a cosine operator  function with associated phase space ${\mathcal Y}\times {\mathcal X}$. 

We can now factorise ${\mathcal L}_\lambda{\mathcal A}_\lambda$ as in~\eqref{fac2}. Taking into account Lemma~\ref{pert} and the usual bounded perturbation theorem for cosine operator functions, we can finally conclude that ${\mathcal A}$ generates a cosine operator function with associated phase space ${\mathcal Y}\times {\mathcal X}$ if and only if the operator $A_0-D^A_\lambda B$ generates a cosine operator function with associated phase space $Y\times X$. By Lemma~\ref{pert} and Lemma~\ref{lemmino} this is the case if and only if the unperturbed operator $A_0$ generates a cosine operator function with associated phase space $Y\times X$.
\end{proof}

\begin{rem}\label{dampbis} 
By Remark~\ref{damp} we can characterize the well-posedness of 
$$\left\{
\begin{array}{rcll}
\ddot{\mathfrak u}(t)&=& \mathcal{A}{\mathfrak u}(t)+\mathcal{C}\dot{\mathfrak u}(t), &t\in{\mathbb R},\\
  {\mathfrak u}(0)&=&\mathfrak{f},\qquad  \dot{\mathfrak u}(0)=\mathfrak{g},&
\end{array}
\right.$$
for a damping operator ${\mathcal C}\in{\mathcal L}({\mathcal Y})$. The Kisy\'nski space ${\mathcal Y}=Y\times \partial X$ has the nice property that an operator matrix 
$${\mathcal C}:=
\begin{pmatrix}
0 & 0 \\ C & \tilde{C}
\end{pmatrix}$$
is bounded on $\mathcal Y$ if (and only if) $C\in{\mathcal L}(Y,\partial X)$ and $\tilde{C}\in{\mathcal L}(\partial X)$. Thus, we can perturb our dynamical boundary conditions by a quite wide class of \emph{unbounded} (viz, unbounded from $X$ to $\partial X$) damping operators $C$.

More precisely, let $C\in{\mathcal L}(Y,\partial X)$ and $\tilde{C}\in{\mathcal L}(\partial X)$. Then, taking into account Remark~\ref{wellpo}, our approach yields an abstract result that can be reformulated in the following intuitive way: The second order abstract problem 
$$\ddot{u}(t)= {A}u(t), \qquad t\in{\mathbb R},$$
with (damped) dynamical boundary conditions 
$$ (Lu)^{\cdot\cdot}(t)= Bu(t)+ C\dot{u}(t) + \tilde{B} Lu(t) + \tilde{C}(Lu)^{\cdot}(t), \qquad t\in {\mathbb R},$$
has a unique classical solution for all initial conditions
$$u(0)\in D(A),\qquad  \dot{u}(0)\in Y,\qquad{\rm and}\quad (Lu)^\cdot(0)\in\partial X,$$
depending continuously on the initial values, if and only if the same problem with homogeneous boundary conditions 
$$Lu(t) =0,\qquad t\in{\mathbb R},$$
has a unique classical solution for all initial conditions 
$$u(0)\in D(A)\qquad{\rm and}\qquad {\dot u}(0)\in Y,$$
depending continuously on the initial values. We will consider a concrete example of such damped boundary conditions in Proposition~\ref{appl2}.
\end{rem}

\bigskip
In a simple case we can express $(C(t,{\mathcal A}))_{t\in{\mathbb R}}$ in terms of $(C(t,A_0))_{t\in{\mathbb R}}$. This is relevant to obtain solutions to inhomogeneous problems, and parallels an analogous expression obtained for $(e^{t{\mathcal A}})_{t\geq 0}$ in \cite[Thm.~3.6]{[KMN03]}. (As an example of a setting where the following result holds we mention the case where $A_0$ is in fact the Laplacian equipped with Robin boundary conditions.)

\begin{cor}\label{formula}
Assume $A_0$ to be invertible and to generate a cosine operator function on $X$. Let further $B=\tilde{B}=0$. Then 
$${\mathcal A}=\begin{pmatrix}A & 0\\ 0 & 0\end{pmatrix},\qquad
D({\mathcal A})=\left\{\begin{pmatrix}
u\\ x
\end{pmatrix}\in D(A)\times\partial X : Lu=x
\right\},$$
generates a cosine operator function on $\mathcal X=X\times \partial X$ which is given by
\begin{equation}\label{expl}
C(t,{\mathcal A})=\begin{pmatrix}
C(t,A_0) & D_0-C(t,A_0)D_0
\\ 0 & I_{\partial X}
\end{pmatrix},\qquad t\in{\mathbb R}.
\end{equation}
\end{cor}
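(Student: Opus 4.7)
The plan is to exploit the factorisation~\eqref{fac} at $\lambda=0$, which is available because $A_0$ is invertible, and then to invoke the similarity Lemma~\ref{simil}. Since $B=\tilde{B}=0$ the factorisation degenerates to
$${\mathcal A}={\mathcal A}_0{\mathcal L}_0=\begin{pmatrix}A_0 & 0\\ 0 & 0\end{pmatrix}\begin{pmatrix}I_X & -D_0\\ 0 & I_{\partial X}\end{pmatrix},$$
hence
$${\mathcal L}_0\,{\mathcal A}\,{\mathcal L}_0^{-1}={\mathcal L}_0{\mathcal A}_0=\begin{pmatrix}A_0 & 0\\ 0 & 0\end{pmatrix}$$
with diagonal domain $D(A_0)\times\partial X$ on $\mathcal X$.

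By Lemma~\ref{lemmamu} one has $D_0\in{\mathcal L}(\partial X,X)\cap{\mathcal L}(\partial X,Y)$, so that ${\mathcal L}_0$ is an isomorphism both on ${\mathcal X}$ and on ${\mathcal Y}=Y\times\partial X$; a direct check shows ${\mathcal L}_0^{-1}=\begin{pmatrix}I_X & D_0\\ 0 & I_{\partial X}\end{pmatrix}$. The diagonal operator matrix with diagonal domain ${\rm diag}(A_0,0)$ clearly generates the diagonal cosine operator function
$$\begin{pmatrix}C(t,A_0) & 0\\ 0 & I_{\partial X}\end{pmatrix},\qquad t\in{\mathbb R},$$
since the null operator generates the constant cosine operator function $I_{\partial X}$. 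Applying Lemma~\ref{simil} therefore gives
$$C(t,{\mathcal A})={\mathcal L}_0^{-1}\begin{pmatrix}C(t,A_0) & 0\\ 0 & I_{\partial X}\end{pmatrix}{\mathcal L}_0,\qquad t\in{\mathbb R}.$$

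The only remaining step is the explicit multiplication of the three matrices on the right-hand side, which is routine and delivers precisely the expression~\eqref{expl}. I do not foresee any real obstacle: the whole argument reduces to the isomorphism property of ${\mathcal L}_0$, which is immediate from Lemma~\ref{lemmamu}, and to the trivial observation that $C(t,0)=I_{\partial X}$ for the null generator. (As an aside, the same formula can be verified directly by checking the D'Alembert relation for the putative cosine operator function and by computing the limit $\frac{2}{t^2}(C(t,{\mathcal A})-I){\mathfrak u}$, using the identity $LD_0=I_{\partial X}$ to see that $u-D_0x\in D(A_0)$ for every $\binom{u}{x}\in D({\mathcal A})$.)
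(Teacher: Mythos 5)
Your argument is correct and is essentially the paper's own proof: both use the factorisation~\eqref{fac} at $\lambda=0$ (available since $A_0$ is invertible and $B=\tilde B=0$) to see that $\mathcal A$ is similar via ${\mathcal L}_0$ to the diagonal matrix $\operatorname{diag}(A_0,0)$ with domain $D(A_0)\times\partial X$, whose cosine function is $\operatorname{diag}(C(t,A_0),I_{\partial X})$, and then apply Lemma~\ref{simil} and compute ${\mathcal L}_0^{-1}\operatorname{diag}(C(t,A_0),I_{\partial X}){\mathcal L}_0$ to obtain~\eqref{expl}. No gaps; your closing remark about verifying the formula directly via the D'Alembert relation is a harmless extra.
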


\begin{proof}
It has been shown in the proof of Lemma~\ref{lemmakmn} that the operator matrix $\mathcal A$ is similar to ${\mathcal L}_0 {\mathcal A}_0$ given by \eqref{fac2}, that is, to 
$${\mathcal L}_0{\mathcal A}_0=
\begin{pmatrix}
A_0 & 0\\
0 & 0
\end{pmatrix}.$$
Now, ${\mathcal L}_0 {\mathcal A}_0$ is a diagonal operator matrix whose entries generate cosine operator functions. Thus, also ${\mathcal L}_0 {\mathcal A}_0$ generates a cosine operator function  that is given by
$$C(t,{\mathcal L}_0 {\mathcal A}_0)=
\begin{pmatrix}
C(t,A_0) & 0\\
0 & I_{\partial X}
\end{pmatrix},\qquad t\in\mathbb{R}.$$
Applying Lemma~\ref{simil} we obtain that $(C(t,{\mathcal A}))_{t\in{\mathbb R}}=({\mathcal L}_0^{-1} C(t,{\mathcal L}_0 {\mathcal A}_0){\mathcal L}_0)_{t\in\mathbb R}$ is given by~\eqref{expl}.
\end{proof}

\begin{rems}\label{unbound} 
1. Although the setting in which it holds is elementary, the above corollary bears some interest in that one can easily check the relation between the boundedness of the cosine operator function generated by $A_0$ and the boundedness of the cosine function generated by $\mathcal A$. More precisely, under the assumptions of Corollary~\ref{formula} it follows by~\eqref{expl} that $(C(t,{\mathcal A}))_{t\in\mathbb R}$ is bounded (resp., $\gamma$-periodic) on $\mathcal X$ if and only if $(C(t,A_0))_{t\in\mathbb R}$ is bounded (resp., $\gamma$-periodic).  On the other hand, integrating~\eqref{expl} one sees that the associated sine operator function is
\begin{equation}\label{formsine}
S(t,{\mathcal A})=\begin{pmatrix}
S(t,A_0) & tD_0-S(t,A_0)D_0
\\ 0 & tI_{\partial X}
\end{pmatrix},\qquad t\in{\mathbb R}.
\end{equation}
This shows that, under the assumptions of Corollary~\ref{formula}, $(S(t,{\mathcal A}))_{t\in\mathbb R}$ is never bounded on $\mathcal X$, be $(S(t,A_0))_{t\in\mathbb R}$ (or, equivalently, $(C(t,A_0))_{t\in\mathbb R}$) bounded or not. However, $(S(t,{\mathcal A}))_{t\in\mathbb R}$ is indeed bounded (resp., $\gamma$-periodic) on ${\rm ker}(L)\times \{0\}$ if and only if $(C(t,A_0))_{t\in\mathbb R}$ is bounded (resp., $\gamma$-periodic).

2. The above results yield in particular that the abstract wave equation with inhomogeneous boundary conditions
$$\left\{
\begin{array}{rcll}
 \ddot{u}(t)&=& {A}u(t), &t\in{\mathbb R},\\
 Lu(t)&=& \psi t +\xi, &t\in {\mathbb R},\\
   u(0)&=&f,\qquad  \dot{u}(0)=g,\qquad (Lu)^\cdot(0)=j,&
\end{array}
\right.$$
has a unique classical solution for all $\psi,\xi\in \partial X$ and all $f\in D(A)$, $g\in Y$, and $j\in\partial X$, depending continuously on the initial data, if and only if $A_0$ generates a cosine operator function with associated phase space $Y\times X$. Let now $A_0$ be invertible. Then, by~\eqref{formsine} such a classical solution is necessarily unbounded whenever $j\not=0$; on the other hand, for $j=0$ the solution to the above highly non-dissipative inhomogeneous problem is bounded if and only if $(C(t,A_0))_{t\in\mathbb R}$ is bounded.
\end{rems}

\section{The case $L\in{\mathcal L}(Y,\partial X)$}

We now consider the case where the boundary operator $L$ is bounded from the Kisy\'nski space to the boundary space. As already mentioned in Section~1, this case needs to be treated differently. To this aim, we complement the Assumptions~\ref{ass} and~\ref{assbis} by the following, which we impose
throughout this section.

\begin{assum}\label{ass2}
{\hskip0em
\begin{enumerate}
\item $V$ is a Banach space such that $V\hookrightarrow Y$.
\item $L$ can be extended to an operator that is bounded from $Y$ to $\partial X$, which we denote again by $L$, and such that $\ker(L)=V$.
\end{enumerate}}
\end{assum}

To adapt the methods of Section~4 to the current setting, we need to apply Lemma~\ref{simil}. This is made possible by the following.

\begin{lemma}\label{mainlemma}
Consider the Banach space 
\begin{equation}\label{spacey}
{\mathcal V}:=\left\{
\begin{pmatrix} u\\ x\end{pmatrix}\in
 Y\times {\partial X} : Lu=x\right\}.
\end{equation}
Then for all $\lambda\in\rho(A_0)$ the operator matrix ${\mathcal L}_\lambda$ defined in~\eqref{fac} can be restricted to an operator matrix that is an isomorphism from ${\mathcal V}$ to 
$${\mathcal W}:=V\times \partial X,$$ 
which we denote again by ${\mathcal L}_\lambda$. Its inverse is the operator matrix
\begin{equation}
\begin{pmatrix}\label{inv}
I_V & D^A_\lambda\\
0 & I_{\partial X}
\end{pmatrix}.
\end{equation}
\end{lemma}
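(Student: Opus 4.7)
The plan is to verify directly that the formal matrix action of ${\mathcal L}_\lambda$ and of the matrix in~\eqref{inv} carry ${\mathcal V}$ into ${\mathcal W}$ and vice versa, and then invoke the closed graph theorem (or simply the boundedness of $D^A_\lambda$) to get continuity in both directions.

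First I would note that ${\mathcal V}$ is a Banach space in its own right: under Assumption~\ref{ass2}.(2), $L\in{\mathcal L}(Y,\partial X)$, so ${\mathcal V}$ is exactly the graph of $L$ regarded as a subset of $Y\times\partial X$, hence closed, hence a Banach space with the inherited norm. The space ${\mathcal W}=V\times\partial X$ is obviously Banach since $V\hookrightarrow Y$ is assumed closed. The key algebraic facts I will use are $L D^A_\lambda=I_{\partial X}$ (from the very definition of the Dirichlet operator in Lemma~\ref{lemmamu}) and $\ker L = V$ in $Y$ (from Assumption~\ref{ass2}.(2)), together with the continuity $D^A_\lambda\in{\mathcal L}(\partial X,Y)$ which is again given by Lemma~\ref{lemmamu}.

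Next I would check the mapping ${\mathcal L}_\lambda:{\mathcal V}\to{\mathcal W}$. Given $\binom{u}{x}\in{\mathcal V}$ one has $u\in Y$, $x\in\partial X$, and $Lu=x$. Since $D^A_\lambda x\in Y$, the vector $u-D^A_\lambda x$ lies in $Y$, and
\[
L(u-D^A_\lambda x)=Lu-LD^A_\lambda x=x-x=0,
\]
so $u-D^A_\lambda x\in V$, confirming that ${\mathcal L}_\lambda\binom{u}{x}=\binom{u-D^A_\lambda x}{x}\in{\mathcal W}$. Conversely, for $\binom{v}{x}\in{\mathcal W}$ one has $v+D^A_\lambda x\in Y$ and $L(v+D^A_\lambda x)=0+x=x$, so the matrix in~\eqref{inv} carries ${\mathcal W}$ into ${\mathcal V}$. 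A one-line matrix multiplication then shows that the two operators are mutual inverses.

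It remains to check boundedness. The component $u\mapsto u-D^A_\lambda x$ is linear in $(u,x)$ and estimated by
\[
\|u-D^A_\lambda x\|_Y\leq \|u\|_Y+\|D^A_\lambda\|_{{\mathcal L}(\partial X,Y)}\|x\|_{\partial X},
\]
so ${\mathcal L}_\lambda$ is bounded from ${\mathcal V}\subset Y\times\partial X$ into $V\times\partial X={\mathcal W}$; the analogous estimate gives continuity of the inverse. The main (only) subtlety to watch for is that $D^A_\lambda$ is being used in its incarnation as a bounded operator into $Y$ (not merely into $X$), which is precisely the point of the second statement of Lemma~\ref{lemmamu}; without that refinement one would obtain continuity only with values in $X\times\partial X$, which would be useless for the similarity argument feeding into Theorem~\ref{main}'s analogue in this setting.
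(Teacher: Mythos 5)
Your proposal is correct and follows essentially the same route as the paper's proof: verify that ${\mathcal L}_\lambda$ maps $\binom{u}{Lu}$ to $\binom{u-D^A_\lambda Lu}{Lu}\in V\times\partial X$ using $LD^A_\lambda=I_{\partial X}$ and $\ker L=V$, check the candidate inverse maps ${\mathcal W}$ back into ${\mathcal V}$, and rely on $D^A_\lambda\in{\mathcal L}(\partial X,Y)$ from Lemma~\ref{lemmamu}. Your explicit remarks on the closedness of ${\mathcal V}$ as the graph of $L$ and the norm estimates for boundedness are welcome refinements that the paper leaves implicit, but they do not change the argument.
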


\begin{proof} Take $\lambda\in\rho(A_0)$. The operator matrix ${\mathcal L}_\lambda$ is everywhere defined on ${\mathcal V}$, and for $\mathfrak{u}=\begin{pmatrix}u \\ Lu\end{pmatrix}\in\mathcal V$ there holds
$${\mathcal L}_\lambda{\mathfrak u}=
\begin{pmatrix}
I_Y & -D^A_\lambda\\
0 & I_{\partial X}
\end{pmatrix}
\begin{pmatrix}
u \\ Lu
\end{pmatrix}=
\begin{pmatrix}
u-D^A_\lambda Lu\\ Lu
\end{pmatrix}.$$
Now $u\in Y$ and also $D^A_\lambda Lu\in Y$, due to Lemma~\ref{lemmamu}. Thus, the vector $u-D^A_\lambda Lu\in V$, since also $L(u-D^A_\lambda Lu)=Lu- LD^A_\lambda Lu= Lu-Lu=0$. This shows that ${\mathcal L}_\lambda{\mathfrak u}\in {\mathcal W}$. 

Moreover, one sees that the operator matrix given in~\eqref{inv} is the inverse of ${\mathcal L}_\lambda$. To show that it maps ${\mathcal W}$ into ${\mathcal V}$, take $v\in V$, $x\in\partial X$. Then
$$\begin{pmatrix}
I_V & D^A_\lambda\\
0 & I_{\partial X}
\end{pmatrix}
\begin{pmatrix}
v \\ x
\end{pmatrix}=
\begin{pmatrix}
v+D^A_\lambda x\\ x
\end{pmatrix}.$$
Now $v+D^A_\lambda x\in Y$ because $V\hookrightarrow Y$ and due to Lemma~\ref{lemmamu}. Moreover, $Lv=0$ by definition of $V$, thus $L(v+D^A_\lambda x)=LD^A_\lambda x=x$, and this yields the claim.
\end{proof}

\begin{theo}\label{main2} 
The operator matrix $\mathcal{A}$ generates a cosine operator function with associated phase space
${\mathcal V}\times{\mathcal X}$ if and only if ${A}_0-D^A_\lambda B$ generates a cosine operator function with associated phase space $V\times X$ for any $\lambda\in\rho(A_0)$.
\end{theo}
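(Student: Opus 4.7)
The plan is to mimic the proof of Theorem~\ref{main} closely, using Lemma~\ref{mainlemma} as the substitute for the easy observation (used in the $L\notin\mathcal{L}(Y,\partial X)$ case) that $\mathcal{L}_\lambda$ is an isomorphism on $\mathcal{Y}$. Lemma~\ref{mainlemma} does exactly the work needed in the present setting: it intertwines the candidate Kisy\'nski space $\mathcal{V}$ with the simpler product space $\mathcal{W}=V\times\partial X$, which has a much more convenient form for the perturbation arguments that follow.

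First I would fix an arbitrary $\lambda\in\rho(A_0)$ and invoke the factorization $\mathcal{A}-\lambda=\mathcal{A}_\lambda\mathcal{L}_\lambda$ from~\eqref{fac}. Conjugating by $\mathcal{L}_\lambda$ shows that $\mathcal{A}$ is similar (via $\mathcal{L}_\lambda$) to $\mathcal{L}_\lambda\mathcal{A}_\lambda+\lambda I_{\mathcal X}$. By Lemma~\ref{mainlemma} the matrix $\mathcal{L}_\lambda$ is an isomorphism from $\mathcal{X}$ onto $\mathcal{X}$ and from $\mathcal{V}$ onto $\mathcal{W}$, so Lemma~\ref{simil} delivers the first equivalence: $\mathcal{A}$ generates a cosine operator function with phase space $\mathcal{V}\times\mathcal{X}$ if and only if $\mathcal{L}_\lambda\mathcal{A}_\lambda+\lambda I_{\mathcal X}$ does so with phase space $\mathcal{W}\times\mathcal{X}$. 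The scalar shift $\lambda I_{\mathcal X}$ is a bounded perturbation on $\mathcal{X}$, so the standard bounded perturbation theorem for cosine operator functions peels it off without affecting the phase space, and the problem reduces to studying $\mathcal{L}_\lambda\mathcal{A}_\lambda$ alone.

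Next I would exploit the three-term decomposition~\eqref{fac2}. The diagonal piece $\mathrm{diag}(A_0-D^A_\lambda B,\,0)$ with domain $D(A_0)\times\partial X$ generates a cosine operator function with phase space $\mathcal{W}\times\mathcal{X}$ precisely when $A_0-D^A_\lambda B$ generates one with phase space $V\times X$, since the null operator on $\partial X$ trivially generates $C(t)\equiv I_{\partial X}$ with Kisy\'nski space $\partial X$. The second summand $\begin{pmatrix} 0 & 0 \\ B & 0 \end{pmatrix}$ maps $[D(A_0)]\times\partial X$ boundedly into $\mathcal{W}=V\times\partial X$ (because $B\in\mathcal{L}([D(A_0)],\partial X)$ by Assumption~\ref{assbis}.(1) and the first coordinate of the image is zero), so Lemma~\ref{pert} adds it without altering the phase space. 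The third summand is bounded on $\mathcal{X}$ by Lemmas~\ref{lemmamu} and~\ref{lemmino}, hence absorbed by the bounded perturbation theorem once more. Chaining these equivalences yields the desired characterization.

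The point where this argument diverges from the proof of Theorem~\ref{main} -- and the reason that $A_0-D^A_\lambda B$, rather than the cleaner $A_0$, appears in the statement -- lies in the last step of that earlier proof. In Section~4 the residual perturbation $D^A_\lambda B$ could be stripped off using Lemma~\ref{pert} because $D^A_\lambda B\in\mathcal{L}([D(A_0)],Y)$ and $Y$ was itself the Kisy\'nski space. Here the relevant Kisy\'nski space is the strictly smaller $V\hookrightarrow Y$, and $D^A_\lambda B$ does not in general take values in $V$; this final simplification is therefore unavailable, and the perturbation $D^A_\lambda B$ must remain explicitly in the statement. I expect this observation -- together with the careful bookkeeping of which perturbations are admissible for which candidate Kisy\'nski space -- to be the only real subtlety of the proof; everything else is a transcription of the argument of Theorem~\ref{main} through Lemma~\ref{mainlemma}.
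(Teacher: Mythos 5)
Your proposal is correct and follows essentially the same route as the paper: similarity via $\mathcal{L}_\lambda$ (Lemma~\ref{simil} combined with Lemma~\ref{mainlemma}) to pass from $\mathcal{V}\times\mathcal{X}$ to $\mathcal{W}\times\mathcal{X}$, then the decomposition~\eqref{fac2} handled by Lemma~\ref{pert} for the $B$-term and the bounded perturbation theorem for the rest. The paper's proof is just a terser version of this (it says ``mimic Theorem~\ref{main}''), and your closing observation on why $A_0-D^A_\lambda B$ cannot be reduced to $A_0$ is exactly the point made in Remark~\ref{remmain}.(4).
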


\begin{proof}
The proof essentially mimics that of Theorem~\ref{main}. We need to observe that, by Lemma~\ref{simil} and Lemma~\ref{mainlemma}, $\mathcal A$ generates a cosine operator function with associated phase space ${\mathcal V}\times {\mathcal X}$ if and only if the operator matrix ${\mathcal L}_\lambda {\mathcal A}_\lambda$ defined in~\eqref{fac2} generates a cosine operator
function with associated phase space ${\mathcal W}\times {\mathcal X}$ for some $\lambda\in\rho(A_0)$. Decomposing ${\mathcal L}_\lambda {\mathcal A}_\lambda$ as in~\eqref{fac2} yields the claim.
\end{proof}

\begin{rems}\label{remmain}
1. Checking the proof of Corollary~\ref{formula}, one can see that the Kisy\'nski space plays no role in it. Thus, Corollary~\ref{formula} and Remark~\ref{unbound} hold true also in the setting of this section. In particular, if $A_0$ is invertible and $B={\tilde B}=0$, then $(C(t,{\mathcal A}))_{t\in \mathbb R}$ is bounded (resp., $\gamma$-periodic) on $\mathcal X$ if and only if $(C(t,{A_0}))_{t\in \mathbb R}$ is bounded (resp., $\gamma$-periodic), and in this case also $(S(t,{\mathcal A}))_{t\in \mathbb R}$ is bounded (resp., $\gamma$-periodic) on $V\times\{0\}$.

2. It should be emphasized that the above identification of the Kisy\'nski space 
${\mathcal V}$ is {\sl not} topological, and it may be tricky to endow it with a ``good" norm, since $\mathcal V$ is not a product space. More precisely, the ``natural" norms
$$\left\Vert
{\begin{pmatrix}
u\\ Lu
\end{pmatrix}}
\right\Vert_{\mathcal V}
:= \Vert u\Vert_Y+\Vert Lu\Vert_{\partial X}$$
or (in the Hilbert space case)
$$\left\Vert
{\begin{pmatrix}
u\\ Lu
\end{pmatrix}}
\right\Vert_{\mathcal V}
:= \left(\Vert u\Vert^2_Y+\Vert Lu\Vert^2_{\partial X}\right)^\frac{1}{2}$$
may not be the most suitable -- that is, they may not yield an energy space. This will be made clear in Remark~\ref{motivrevrem}.

3) Observe that if $\mathfrak g$ defined in~\eqref{initdata} is in $\mathcal V$, then the compatibility condition $Lg=j$ holds. Thus, taking into account Remark~\ref{wellpo} Theorem~\ref{main2} can be expressed in the following way: The second order abstract problem 
$$ \ddot{u}(t)= {A}u(t), \qquad t\in{\mathbb R},$$
with dynamical boundary conditions
$$ (Lu)^{\cdot\cdot}(t)= Bu(t)+ \tilde{B} Lu(t),\qquad t\in {\mathbb R},$$
has a unique classical solution for all initial conditions
$$u(0)\in D(A)\qquad{\rm and}\qquad  \dot{u}(0)\in Y,$$
depending continuously on the initial values, if and only if the perturbed second order problem 
$$\ddot{u}(t)= Au(t)-D^A_\lambda Bu(t), \qquad t\in{\mathbb R},$$
with homogeneous boundary conditions 
$$Lu(t)=0,\qquad t\in{\mathbb R},$$
has a unique classical solution for all initial conditions 
$$u(0)\in D(A)\qquad{\rm and}\qquad {\dot u}(0)\in V,$$
depending continuously on the initial values.

4) It follows by Lemma~\ref{lemmino} that $D^A_\lambda B$ is bounded from $[D(A_0)]$ to $Y$ (the Kisy\'nski space in Section~4), while $D^A_\lambda B$ is {\sl not} bounded from $[D(A_0)]$ to the current Kisy\'nski space $V$. In fact, $D(A)$ is in general not contained in $V$, hence we cannot apply Lemma~\ref{lemmamu}. This explains why the characterization obtained in Theorem~\ref{main2} is less satisfactory than that obtained in Theorem~\ref{main}. There the properties of $\mathcal A$   depend exclusively on the properties of the unperturbed operator $A_0$.

Though, in many concrete cases we can still apply some perturbation result if we moreover make some reasonable assumption on the decay of the norm of the Dirichlet operator $D^A_\lambda$ associated to $A$.
\end{rems}

\begin{cor}\label{rhandi} 
Let $A_0$ generate a cosine operator function with associated phase space $V\times X$. Assume that 
\begin{equation}\tag{D}
\Vert D^A_\lambda\Vert_{{\mathcal L}(\partial X,X)} =O(\vert\lambda\vert^{-\epsilon})\quad\quad \hbox{as}\quad\vert\lambda\vert\rightarrow \infty,\quad {\rm Re}\lambda>0,
\end{equation}
and moreover that
\begin{equation}\tag{R}
\int_0^1\Vert B S(s,{A_0})f\Vert_{\partial X}\; ds\;\leq\; M\Vert f\Vert_X
\end{equation}
holds for all $f \in D(A_0)$ and some $M>0$. Then $\mathcal A$ generates a cosine operator function with associated phase space $\mathcal{V}\times\mathcal{X}$.
\end{cor}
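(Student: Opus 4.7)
The strategy follows Theorem~\ref{main2}: since $\mathcal{A}$ generates a cosine operator function with phase space $\mathcal{V}\times\mathcal{X}$ if and only if $A_0-D^A_\lambda B$ generates a cosine operator function with phase space $V\times X$ for some $\lambda\in\rho(A_0)$, and since $A_0$ already generates such a cosine function by assumption, what remains is a perturbation result for the operator $P_\lambda:=-D^A_\lambda B$ on $X$. As pointed out in part 4 of Remarks~\ref{remmain}, the Desch--Schappacher type Lemma~\ref{pert} cannot be invoked here, because $P_\lambda$ is in general not bounded from $[D(A_0)]$ into the Kisy\'nski space $V$. The roles of assumptions (D) and (R) are precisely to supply a Miyadera--Voigt type condition adapted to cosine operator functions (in the spirit of the Piskar\'ev--Shaw theorem alluded to after Lemma~\ref{pert}).

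Concretely, from (R) and the density of $D(A_0)$ in $X$ one first extends
$$\int_0^1 \|B S(s, A_0) x\|_{\partial X}\, ds \;\leq\; M\|x\|_X, \qquad x\in X,$$
and hence, combining with (D),
$$\int_0^1 \|P_\lambda S(s, A_0) x\|_X\, ds \;\leq\; M\,\|D^A_\lambda\|_{\mathcal{L}(\partial X, X)}\, \|x\|_X.$$
By (D), for $\lambda>0$ real and sufficiently large the constant $M\,\|D^A_\lambda\|_{\mathcal{L}(\partial X, X)}$ becomes strictly less than $1$ (and in fact arbitrarily small), which is the smallness required in the Miyadera bound. A cosine-function version of the Miyadera--Voigt perturbation theorem then yields that $A_0+P_\lambda = A_0-D^A_\lambda B$ generates a cosine operator function on $X$ with the \emph{same} Kisy\'nski space $V$; combined with Theorem~\ref{main2}, this gives the claim.

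Equivalently, by Lemma~\ref{wellp2char} one may work on the reduction matrix
$$\mathbf{A}_0 := \begin{pmatrix}0 & I_V\\ A_0 & 0\end{pmatrix}\quad\text{on}\quad V\times X,\qquad \mathbf{P}_\lambda := \begin{pmatrix}0 & 0\\ P_\lambda & 0\end{pmatrix},$$
and show that $\mathbf{A}_0+\mathbf{P}_\lambda$ generates a $C_0$-semigroup on $V\times X$ by a Miyadera argument applied to the perturbation $\mathbf{P}_\lambda$.

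The main obstacle lies in the fact that a direct Miyadera calculation on the reduction semigroup would also require an estimate of $\int_0^1 \|P_\lambda C(s,A_0) u\|_X\, ds$ for $u\in D(A_0)$; using $C(s,A_0)u = u+\int_0^s S(r,A_0)A_0 u\,dr$ together with (R) gives only
$$\int_0^1 \|B C(s,A_0)u\|_{\partial X}\, ds \;\leq\; \|Bu\|_{\partial X}+M\|A_0 u\|_X,$$
i.e.\ a bound in the graph norm of $A_0$ rather than in the Kisy\'nski norm $\|\cdot\|_V$. It is therefore essential to use a perturbation theorem specifically designed for cosine operator functions, where the admissibility condition involves only the sine family $S(\cdot,A_0)$ (as (R) is phrased), rather than a naive semigroup Miyadera transported to the phase space. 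Once this is in place, conditions (D) and (R) fit together exactly as sketched above.
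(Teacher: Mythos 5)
Your proof is correct and follows essentially the same route as the paper: reduce via Theorem~\ref{main2} to showing that $A_0-D^A_\lambda B$ generates a cosine operator function with phase space $V\times X$, and obtain this from a Miyadera--Voigt-type perturbation theorem for cosine operator functions (the paper cites Rhandi, \cite[Thm.~1.1]{[Rh92]}), with the smallness of the constant supplied by (D) for large $\lambda$ and the admissibility by (R). The only superfluous step is your extension of (R) to all $x\in X$: the Miyadera-type condition is needed only on $D(A_0)$, and $BS(s,A_0)x$ need not even be defined for general $x\in X$.
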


\begin{proof} 
Let $\lambda\in\rho(A_0)$. The basic tool for the proof is a general Miyadera--Voigt-type perturbation result due to Rhandi, cf. \cite[Thm.~1.1]{[Rh92]}. In our context, Rhandi's result yields that $A_0-D^A_\lambda B$ generates a cosine operator function with associated phase space $V\times X$ whenever 
$$\int_0^1\Vert D^A_\lambda B S(s,{A_0})f\Vert_{X}\; ds\;\leq\; q\Vert f\Vert_X$$
holds for all $f \in D(A_0)$ and some $q<1$. This condition is clearly satisfied under our assumptions.
\end{proof}

\begin{rem}\label{simpler} 
The assumption on the decay of the norm of the Dirichlet operator that appears in Corollary~\ref{rhandi} is in particular satisfied whenever $D(A)$ is contained in any complex interpolation space $X_\epsilon:=[D(A_0),X]_\epsilon$, $0<\epsilon<1$, cf.~\cite[Lemma~2.4]{[GK91]}. Such interpolation spaces are well defined, since in particular $A_0$ generates an analytic semigroup. Moreover, by~\cite[Prop.~2.2]{[GK91]}  the Dirichlet operators associated with operators that share same domain also enjoy same decay rate.
\end{rem}

\section{Compactness, asymptotic behavior, and regularity}

In this section we investigate compactness, regularity and (almost) periodicity properties for the solutions to $(\rm{AIBVP}_2)$. It is important to observe that, unless otherwise explicitly stated, all the following results hold under the general Assumptions~\ref{ass} and~\ref{assbis}, that is, in both the settings of Sections~4 and 5.

\begin{prop}\label{compres}
The operator matrix $\mathcal {A}$ has compact resolvent if and only if the operator $A_0$ has compact resolvent and $\partial X$ is finite dimensional.
\end{prop}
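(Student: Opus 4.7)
The plan is to exploit the factorisation $\mathcal{A}-\lambda={\mathcal A}_\lambda{\mathcal L}_\lambda$ from~\eqref{fac} and reduce the question to a component-wise analysis of an explicitly computable triangular inverse. First I would choose $\lambda\in\rho(A_0)$ with $|\lambda|$ so large that $\lambda-B_\lambda$ is invertible on $\partial X$ (recall $B_\lambda=\tilde B+BD^A_\lambda\in\mathcal{L}(\partial X)$ by Lemma~\ref{lemmino}); this ensures $\lambda\in\rho(\mathcal{A})$ via the factorisation. Since ${\mathcal L}_\lambda$ is an isomorphism on ${\mathcal X}$, the resolvent $(\lambda-\mathcal{A})^{-1}$ is compact on ${\mathcal X}$ if and only if ${\mathcal A}_\lambda^{-1}$ is. Moreover, the question whether $\mathcal{A}$ has compact resolvent is independent of the particular $\lambda$ chosen, since resolvents at different points differ by a bounded factor.

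Next I would compute ${\mathcal A}_\lambda^{-1}$ explicitly. Solving ${\mathcal A}_\lambda\begin{pmatrix}u\\x\end{pmatrix}=\begin{pmatrix}f\\g\end{pmatrix}$ yields
$$
{\mathcal A}_\lambda^{-1}=\begin{pmatrix}
(A_0-\lambda)^{-1} & 0\\
-(B_\lambda-\lambda)^{-1}B(A_0-\lambda)^{-1} & (B_\lambda-\lambda)^{-1}
\end{pmatrix}.
$$
This is a lower-triangular operator matrix on ${\mathcal X}=X\times \partial X$, and compactness can now be checked entry by entry.

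For the ``if'' direction, assume $A_0$ has compact resolvent and $\partial X$ is finite-dimensional. Then $(A_0-\lambda)^{-1}$ is compact on $X$; $(B_\lambda-\lambda)^{-1}\in\mathcal{L}(\partial X)$ is automatically compact since its codomain is finite-dimensional; and the $(2,1)$-entry factors through the compact operator $(A_0-\lambda)^{-1}$ (the composition $B(A_0-\lambda)^{-1}\colon X\to\partial X$ makes sense because $D(A_0)\subset D(A)\subset [D(A)_L]$ and $B\in\mathcal{L}([D(A)_L],\partial X)$ by Assumption~\ref{assbis}), so all entries are compact and hence ${\mathcal A}_\lambda^{-1}$ is compact.

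For the converse, assume ${\mathcal A}_\lambda^{-1}$ is compact on ${\mathcal X}$. Applying it to elements of the form $\begin{pmatrix}f\\0\end{pmatrix}$ and composing with the projection onto the first coordinate shows that $f\mapsto (A_0-\lambda)^{-1}f$ is compact on $X$. Similarly, applying ${\mathcal A}_\lambda^{-1}$ to $\begin{pmatrix}0\\g\end{pmatrix}$ and projecting onto the second coordinate shows that $(B_\lambda-\lambda)^{-1}\in\mathcal{L}(\partial X)$ is compact; since $(B_\lambda-\lambda)^{-1}$ is a bijection on $\partial X$, the identity $I_{\partial X}=(B_\lambda-\lambda)^{-1}(B_\lambda-\lambda)$ is then compact, forcing $\dim\partial X<\infty$.

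I expect the only delicate point to be a clean justification that the equivalence does not depend on the particular $\lambda$, and the verification that in the ``if'' direction the $(2,1)$-entry is well-defined on $X$ (rather than only on $D(A_0)$); however, both follow routinely, the latter because $(A_0-\lambda)^{-1}$ maps $X$ continuously into $[D(A_0)]\hookrightarrow[D(A)_L]$, on which $B$ is bounded.
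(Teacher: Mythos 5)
Your reduction to the triangular factorisation \eqref{fac} and the entrywise compactness analysis of ${\mathcal A}_\lambda^{-1}$ is sound in itself, but the proof hinges on a step that is not available under the standing hypotheses: the selection of $\lambda\in\rho(A_0)$ with ``$|\lambda|$ so large that $\lambda-B_\lambda$ is invertible''. Proposition~\ref{compres} is stated under the general Assumptions~\ref{ass} and~\ref{assbis} only; there $A_0$ is merely closed, densely defined and with \emph{nonempty} resolvent set, so $\rho(A_0)$ need not contain points of large modulus at all. Moreover, even when $\rho(A_0)$ is unbounded, $B_\lambda=\tilde B+BD^A_\lambda$ depends on $\lambda$ through the Dirichlet operator, and Lemma~\ref{lemmamu} gives no uniform bound on $\Vert D^A_\lambda\Vert$ as $|\lambda|\to\infty$; a decay estimate of the type of condition (D) in Corollary~\ref{rhandi} is \emph{not} assumed here. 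Hence the existence of a $\lambda\in\rho(A_0)$ with $\lambda\in\rho(B_\lambda)$ --- equivalently, by the factorisation, of a point of $\rho({\mathcal A})\cap\rho(A_0)$ --- is precisely the nontrivial issue, and without such a point neither direction of your argument has a resolvent to which it can be applied: in the ``if'' direction you must \emph{produce} a point of $\rho(\mathcal A)$, and in the ``only if'' direction your explicit formula is only valid at points of $\rho({\mathcal A})\cap\rho(A_0)$.

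The paper avoids this entirely by never using a resolvent point of $\mathcal A$: by \cite[Prop.~II.4.25]{[EN00]} compactness of the resolvent is equivalent to compactness of the canonical imbedding $i_{[D({\mathcal A})],{\mathcal X}}$, and this imbedding factorises as $i_{[D(A_0)]\times\partial X,{\mathcal X}}\circ{\mathcal L}_\lambda$ for an \emph{arbitrary} $\lambda\in\rho(A_0)$, with ${\mathcal L}_\lambda$ an isomorphism; so the statement reduces to compactness of $[D(A_0)]\hookrightarrow X$ together with $\dim\partial X<\infty$ without any spectral information on $\mathcal A$ beyond what the notion of compact resolvent presupposes. If you do secure a point of $\rho({\mathcal A})\cap\rho(A_0)$ (for instance in the settings of Corollaries~\ref{period} and~\ref{compsine}, where $\mathcal A$ is a generator), your computation gives a correct alternative argument, up to one small repair: the claim that the $(2,1)$-entry is compact ``because it factors through $(A_0-\lambda)^{-1}$'' does not stand as written, since $B$ is not bounded on $X$ and $(A_0-\lambda)^{-1}$ is not compact as a map into $[D(A_0)]$; the correct (and simpler) reason in the ``if'' direction is that it is a bounded operator with range in the finite-dimensional space $\partial X$, hence of finite rank.
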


\begin{proof} Take $\lambda\in\rho(A_0)$ and consider the operator ${\mathcal L}_\lambda$ defined in~\eqref{fac}, which is an isomorphism on ${\mathcal X}$ and maps $D({\mathcal A})$ into 
${\mathcal L}_\lambda D({\mathcal A})=D(A_0)\times\partial X$. 
Since ${\mathcal L}_\lambda$ is not compact (beside in the trivial case of ${\rm dim}\; X<\infty$), by~\cite[Prop.~II.4.25]{[EN00]} the claim follows, because\footnote{Given two Banach spaces $E,F$ such that $F\hookrightarrow E$, $i_{F, E}$ denotes in the following the continuous imbedding of $F$ in $E$.} 
$i_{[D({\mathcal A})],{\mathcal X}}= i_{[D(A_0)]\times \partial X, {\mathcal X}} \circ {\mathcal L}_\lambda$.  
\end{proof}

As already remarked, there are no known abstract, concretely applicable characterizations of bounded cosine operator functions on general Banach spaces. However, \emph{assuming} boundedness of the cosine operator function, which is sometimes known by other means, we can apply the above compactness result and obtain the following. For the notion of almost periodicity we refer to~\cite[\S~4.5]{[ABHN01]}.

\begin{cor}\label{period}
Let $\mathcal A$ generate a bounded cosine operator function. Assume the imbedding $[D(A_0)]\hookrightarrow X$ to be compact, and $\partial X$ to be finite dimensional. Then the following hold.

\begin{enumerate}
\item $(C(t,{\mathcal A}))_{t\in\mathbb R}$ is almost periodic. If further the inclusion
\begin{equation}\label{incl}
P\sigma(A_0)\cup\{\lambda\in\rho(A_0): \lambda\in
  P\sigma(B_\lambda)\}\subset -\left(\frac{2\pi}{\gamma}\right)^2{\mathbb N}^2
\end{equation}
holds for some $\gamma>0$, then $(C(t,{\mathcal A}))_{t\in\mathbb R}$ is in fact periodic with period $\gamma$.
\item If the operators $A_0$ and $B_0$ are both injective, then also $(S(t,{\mathcal A}))_{t\in\mathbb R}$, hence the solutions to $({\mathcal{ACP}}_2)$ are almost periodic. If further the inclusion~\eqref{incl} holds for some $\gamma>0$, then they are in fact periodic with period $\gamma$. 
\end{enumerate}
\end{cor}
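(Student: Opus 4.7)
The plan is to use Proposition~\ref{compres} together with the spectral theory of operator matrices with non-diagonal domain developed in~\cite{[KMN03]} and~\cite{[En99]}, and then invoke the standard almost periodicity result for bounded $C_0$-groups on Banach spaces whose generator has compact resolvent, cf.~\cite[\S~4.5]{[ABHN01]}.

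First I would apply Proposition~\ref{compres}: compactness of $[D(A_0)]\hookrightarrow X$ together with $\dim\partial X<\infty$ forces $\mathcal A$ to have compact resolvent. Consequently $\sigma(\mathcal A)=P\sigma(\mathcal A)$ is a countable discrete set of finite-multiplicity eigenvalues, and testing the bounded cosine function on any eigenvector $v$ of $\mathcal A$ (one has $C(t,\mathcal A)v=\cos(t\sqrt{-\lambda})v$, which stays bounded in $t$ only for $\lambda\leq 0$) forces $P\sigma(\mathcal A)\subset(-\infty,0]$. Moreover, the factorisation~\eqref{fac} directly yields the spectral containment
$$P\sigma(\mathcal A)\subset P\sigma(A_0)\cup\{\lambda\in\rho(A_0):\lambda\in P\sigma(B_\lambda)\},$$
as already observed in~\cite{[KMN03]} and~\cite{[En99]} in the spectral theory of first order problems with non-diagonal domain.

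Next I would pass to the reduction matrix $\mathbf{\mathcal A}$ (in the sense of~\eqref{reduxm} with $K=\mathcal A$) on the phase space, which by Lemma~\ref{wellp2char} generates a bounded $C_0$-group; each eigenvector of $\mathcal A$ associated with $\lambda\leq 0$ lifts to two eigenvectors of $\mathbf{\mathcal A}$ associated with $\pm i\sqrt{-\lambda}\in i\mathbb R$. Since $\mathcal A$ has compact resolvent and $\partial X$ is finite dimensional, $\mathbf{\mathcal A}$ also has compact resolvent on the phase space. The almost periodicity result cited above then implies that $(e^{t\mathbf{\mathcal A}})_{t\in\mathbb R}$ is almost periodic, and~\eqref{groupcos} shows that its $(1,1)$-entry $C(\cdot,\mathcal A)$ is almost periodic too. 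Under the additional assumption~\eqref{incl} the spectral containment forces $P\sigma(\mathcal A)\subset-\left(\tfrac{2\pi}{\gamma}\right)^2\mathbb N^2$, hence $P\sigma(\mathbf{\mathcal A})\subset\tfrac{2\pi i}{\gamma}\mathbb Z$, and so the bounded almost periodic group (and with it $C(\cdot,\mathcal A)$) is $\gamma$-periodic.

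For~(2), injectivity of $A_0$ and of $B_0$ combined with the above spectral containment and with $\sigma(\mathcal A)=P\sigma(\mathcal A)$ yields $0\in\rho(\mathcal A)$; hence $\mathbf{\mathcal A}$ is invertible and the off-diagonal entry $S(t,\mathcal A)$ of~\eqref{groupcos} is uniformly bounded, so almost (resp., $\gamma$-) periodicity transfers to $S(\cdot,\mathcal A)$ exactly as in the previous step and to the solutions $\mathfrak u(t)=C(t,\mathcal A)\mathfrak f+S(t,\mathcal A)\mathfrak g$ via~\eqref{cosres}. The main obstacle I expect is the verification that the compact resolvent of $\mathcal A$ transfers to $\mathbf{\mathcal A}$ on the phase space: in the Section~5 setting the Kisy\'nski space $\mathcal V$ is not a product space, so the compact embedding $[D(\mathcal A)]\hookrightarrow\mathcal V$ needs to be checked by a direct argument exploiting $\dim\partial X<\infty$ and the compactness of $[D(A_0)]\hookrightarrow X$.
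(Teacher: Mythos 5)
Your spectral bookkeeping (Proposition~\ref{compres} for the compact resolvent of $\mathcal A$, the containment of $P\sigma(\mathcal A)$ in $P\sigma(A_0)\cup\{\lambda\in\rho(A_0):\lambda\in P\sigma(B_\lambda)\}$ via the factorisation~\eqref{fac}, and the deduction $0\in\rho(\mathcal A)$ in part (2)) matches what the paper does, which invokes Engel's equivalence $\lambda\in P\sigma(\mathcal A)\iff\lambda\in P\sigma(B_\lambda)$ for $\lambda\in\rho(A_0)$. The genuine problem is the route you take afterwards, through the reduction matrix on the phase space. First, boundedness of $(C(t,\mathcal A))_{t\in\mathbb R}$ does \emph{not} imply that the group in~\eqref{groupcos} is bounded: the sine entry can grow linearly (take $K=0$, where $C(t)=I$ but $S(t)=tI$; and Remark~\ref{unbound} shows that in the setting of Corollary~\ref{formula} $(S(t,\mathcal A))_{t\in\mathbb R}$ is \emph{never} bounded on $\mathcal X$). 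In part (1) nothing excludes $0\in P\sigma(\mathcal A)$, so the ``bounded $C_0$-group'' to which you want to apply the almost periodicity result simply need not be bounded, and the argument collapses exactly where the conclusion is supposed to come from. Second, the claim that compactness of the resolvent of $\mathcal A$ together with $\dim\partial X<\infty$ gives compact resolvent of the reduction matrix on the phase space is not a mere verification: by Lemma~\ref{comp} this is equivalent to compactness of \emph{both} imbeddings $[D(A_0)]\hookrightarrow Y\hookrightarrow X$ (resp.\ with $V$ in place of $Y$), and the corollary only assumes $[D(A_0)]\hookrightarrow X$ compact, which does not imply either factor is compact. So your phase-space approach would require strictly stronger hypotheses than the statement provides.

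The paper avoids both difficulties by never leaving the cosine-function level: after establishing compact resolvent of $\mathcal A$ and the spectral identification, it quotes the Arendt--Batty result \cite[Cor.~5.6]{[AB97]}, which is tailored to \emph{bounded cosine functions} whose generator has compact resolvent (and, when the generator is invertible, also yields boundedness and almost periodicity of the sine function), and then Piskar\"ev's theorems \cite[Thms.~1, 2, 6, 7]{[Pi83]}, which convert the location of the point spectrum in $-(2\pi/\gamma)^2\mathbb N^2$ into $\gamma$-periodicity of $C(\cdot,\mathcal A)$ and $S(\cdot,\mathcal A)$. If you want to salvage your first-order reduction, you would have to either prove boundedness of the group (which fails in general) or replace ``bounded group with compact resolvent'' by a countable-spectrum/ergodicity criterion on the phase space, and in addition justify the compactness there; using the second-order results directly is both shorter and actually matches the hypotheses of the corollary.
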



\begin{proof} 
To begin with, we need to recall the following result due to Engel, cf.~\cite[\S~2]{[En99]} and~\cite[\S~3]{[Mu04]}: For $\lambda\in\rho(A_0)$ there holds
\begin{equation}\label{charsp}
\lambda\in P\sigma({\mathcal{A}})\iff \lambda\in P\sigma(B_\lambda).
\end{equation}
Moreover, observe that under our assumptions it follows by Lemma~\ref{compres} that $\mathcal A$ has compact resolvent.

1. The almost periodicity of $(C(t,\mathcal{A}))_{t\in\mathbb R}$ is just a corollary of~\cite[Cor.~5.6]{[AB97]}. Further, take into account~\cite[Thm.~1 and Thm.~6]{[Pi83]}. Then, to show the $\gamma$-periodicity of $(C(t,\mathcal{A}))_{t\in\mathbb R}$ it suffices to check that under our assumptions the eigenvalues of $\mathcal A$ lie in $-(\frac{2\pi}{\gamma})^2\mathbb N^2$, for some $\gamma>0$. By~\eqref{charsp}, this holds by assumption.

2. Again by~\eqref{charsp}, if $A_0$ and $B_0$ are both injective, hence invertible, then $\mathcal A$ is invertible, too. It follows by~\cite[Cor.~5.6]{[AB97]} that also $(S(t,\mathcal{A}))_{t\in\mathbb R}$ is (bounded and) almost periodic. We deduce by~\cite[Thm.~2 and Thm.~7]{[Pi83]} that $(S(t,\mathcal{A}))_{t\in\mathbb R}$ is even $\gamma$-periodic if further~\eqref{incl} holds.
\end{proof}

\begin{rems}\label{roots} 
1. By~\cite[Thm.~7]{[Pi83]}, one obtains that $\sigma(A_0)\subset -(\frac{2\pi}{\gamma_0})^2 {\mathbb N}^2$ for some $\gamma_0>0$ if the cosine operator function generated by $A_0$ is periodic with period $\gamma_0$. Thus, condition~\eqref{incl} holds in particular for $\gamma=k\cdot\gamma_0$, $h$ some positive integer, if $(C(t,A_0))_{t\in\mathbb R}$ is $\gamma_0$-periodic and further $\lambda\not\in P\sigma(B_\lambda)$ for all $\lambda\not=-\frac{4\pi^2 h^2}{\gamma^2}$, $h\in\mathbb{N}$.

2. Since under the assumptions of Corollary~\ref{period} 
the operator $B_\lambda$, $\lambda\in\rho(A_0)$, is a scalar matrix, one sees that showing that $\lambda\not\in P\sigma(B_\lambda)$ for all $\lambda\notin -(\frac{2\pi}{\gamma})^2\mathbb{N}^2$ reduces to check that a certain \emph{characteristic equation} has solutions only inside a set of countably many points of the real negative halfline.
\end{rems}

Compactness for cosine operator functions is not a relevant property since it occurs if and only if the underlying Banach space is finite dimensional (see \cite[Lemma~2.1]{[TW77]}). However, the compactness of the associated sine operator function $(S(t))_{t\in\mathbb{R}}$ (i.e., the compactness of $S(t)$ for all $t\in\mathbb{R}$) is less restrictive. By \cite[Prop.~2.3]{[TW77]} we can investigate it by means of Proposition~\ref{compres}.  

\begin{cor}\label{compsine}
Let $\mathcal{A}$ generate a cosine operator function. Then the associated sine operator function $(S(t,\mathcal{A}))_{t\in\mathbb{R}}$ is compact if and only if the imbedding $[D(A_0)]\hookrightarrow X$ is compact and $\partial X$ is finite dimensional.
\end{cor}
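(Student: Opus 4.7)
The plan is to chain together three equivalences. First I would invoke the characterization of compact sine operator functions due to Travis and Webb, namely \cite[Prop.~2.3]{[TW77]}, which states that the sine operator function $(S(t,K))_{t\in\mathbb{R}}$ generated by an operator $K$ is compact if and only if $K$ has compact resolvent. Applying this to $K=\mathcal{A}$, compactness of $(S(t,\mathcal{A}))_{t\in\mathbb{R}}$ is equivalent to $\mathcal{A}$ having compact resolvent.

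Next I would apply Proposition~\ref{compres}, which has just been established: $\mathcal{A}$ has compact resolvent if and only if $A_0$ has compact resolvent and $\partial X$ is finite dimensional. This reduces the claim to showing that $A_0$ has compact resolvent if and only if the imbedding $[D(A_0)]\hookrightarrow X$ is compact.

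This last equivalence is a standard observation: since by Assumption~\ref{ass}.(4) the operator $A_0$ is closed with nonempty resolvent set, for any $\mu\in\rho(A_0)$ the resolvent $R(\mu,A_0)$ is a bounded bijection from $X$ onto $[D(A_0)]$, and in fact $R(\mu,A_0)=\bigl(i_{[D(A_0)],X}\bigr)\circ T$ for a topological isomorphism $T:X\to[D(A_0)]$. Hence $R(\mu,A_0)$ is compact as an operator on $X$ precisely when the inclusion $i_{[D(A_0)],X}$ is compact. Combining the three equivalences yields the stated characterization.

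There is essentially no obstacle here, since the heavy lifting has been done in Proposition~\ref{compres} and the cited result of Travis and Webb; the proof is a short concatenation of known facts, and the only thing to be careful about is recording the standard equivalence between compactness of the resolvent and compactness of the graph-norm inclusion into the underlying space.
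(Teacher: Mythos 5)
Your proposal is correct and matches the paper's intended argument: the paper states the corollary without a displayed proof precisely because it is the concatenation you describe, namely Travis--Webb \cite[Prop.~2.3]{[TW77]} (compactness of the sine operator function is equivalent to compactness of the resolvent of its generator), then Proposition~\ref{compres}, then the standard identification of compact resolvent of $A_0$ with compactness of the imbedding $[D(A_0)]\hookrightarrow X$. Nothing further is needed.
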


By Lemma~\ref{wellp2char} $\mathcal A$ generates a cosine operator function if and only if (a suitable part of) the associated reduction matrix generates a strongly continuous semigroup. Thus, it is sometimes useful to know whether such a reduction matrix has compact resolvent. The following complements a result obtained in~\cite[\S~5]{[GGG03]}.

\begin{lemma}\label{comp} 
The reduction matrix associated with $\mathcal A$ has compact resolvent, i.e., both the imbeddings of $[D({\mathcal A})]$ into the Kisy\'nski space and of the Kisy\'nski space in $\mathcal X$ are compact, if and only if $\partial X$ is finite dimensional and further either of the following holds:
{\hskip0em	
\begin{itemize}
\item $L\not\in{\mathcal L}(Y,\partial X)$ and the imbeddings $[D(A_0)]\hookrightarrow Y\hookrightarrow X$ are both compact, or
\item $L\in{\mathcal L}(Y,\partial X)$ and the imbeddings $[D(A_0)]\hookrightarrow V
\hookrightarrow X$ are both compact.
\end{itemize}}
\end{lemma}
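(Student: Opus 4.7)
My plan is to reduce everything to compactness of imbeddings between product Banach spaces via the similarity transformation $\mathcal{L}_\lambda$ introduced in~\eqref{fac}, and then invoke the elementary fact that the identity on a Banach space is compact if and only if the space is finite dimensional. First, I would spell out the initial reformulation: writing $F$ for the Kisyński space and $\mathbf{K}$ for the reduction matrix in~\eqref{reduxm} with $K=\mathcal{A}$, one has $D(\mathbf{K})=D(\mathcal{A})\times F$, so that the imbedding
\begin{equation*}
[D(\mathbf{K})]=[D(\mathcal{A})]\times F\;\hookrightarrow\; F\times \mathcal{X}
\end{equation*}
is compact if and only if both $[D(\mathcal{A})]\hookrightarrow F$ and $F\hookrightarrow \mathcal{X}$ are compact. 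This explains the ``i.e.'' in the statement.

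Next I would establish the key auxiliary fact that, for each $\lambda\in\rho(A_0)$, the operator $\mathcal{L}_\lambda$ defines a topological isomorphism
\begin{equation*}
\mathcal{L}_\lambda:[D(\mathcal{A})]\;\stackrel{\sim}{\longrightarrow}\;[D(A_0)]\times \partial X.
\end{equation*}
Boundedness in both directions follows from the factorisation~\eqref{fac}, together with Lemma~\ref{lemmamu} (which gives $D^A_\lambda\in{\mathcal L}(\partial X,[D(A)_L])$) and Assumption~\ref{assbis} on $B,\tilde B$; concretely, for $\binom{u}{Lu}\in D(\mathcal{A})$ one writes $u=u_0+D^A_\lambda Lu$ with $u_0\in D(A_0)$ and checks that the graph norms are equivalent. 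Moreover $\mathcal{L}_\lambda$ is an isomorphism on $\mathcal{X}$, an isomorphism on $\mathcal{Y}=Y\times\partial X$ (again by Lemma~\ref{lemmamu}), and an isomorphism from $\mathcal{V}$ onto $\mathcal{W}=V\times\partial X$ by Lemma~\ref{mainlemma}.

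Having set this up, the proof in each of the two cases is immediate. In the first case the Kisyński space is $\mathcal{Y}=Y\times\partial X$, so transporting the two imbeddings by $\mathcal{L}_\lambda$ yields that they are compact if and only if
\begin{equation*}
[D(A_0)]\times\partial X\;\hookrightarrow\; Y\times\partial X\;\hookrightarrow\; X\times\partial X
\end{equation*}
are both compact; since compactness of an imbedding of product spaces is equivalent to compactness of each factor, this reduces to $[D(A_0)]\hookrightarrow Y$ and $Y\hookrightarrow X$ being compact together with compactness of $I_{\partial X}$, i.e.\ $\dim\partial X<\infty$. The second case is entirely analogous: transporting by $\mathcal{L}_\lambda$ one obtains compactness of $[D(A_0)]\times\partial X\hookrightarrow V\times\partial X\hookrightarrow X\times\partial X$, equivalent to $[D(A_0)]\hookrightarrow V\hookrightarrow X$ compactly and $\partial X$ finite dimensional.

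The only genuinely nontrivial step is the isomorphism $\mathcal{L}_\lambda:[D(\mathcal{A})]\to[D(A_0)]\times\partial X$; everything else is routine manipulation of product spaces. This norm-equivalence should be the main obstacle, but it is essentially the calculation already carried out in the proofs of Lemma~\ref{lemmakmn} and Theorem~\ref{main2}, so I would simply cite the argument rather than reproduce it in full.
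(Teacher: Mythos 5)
Your argument is correct and follows essentially the same route as the paper: you conjugate by the operator matrix ${\mathcal L}_\lambda$ from~\eqref{fac} (an isomorphism on $\mathcal X$, on $\mathcal Y$, and from $\mathcal V$ onto $\mathcal W$ by Lemma~\ref{mainlemma}), reduce to imbeddings of product spaces, and split compactness factorwise, with the identity on $\partial X$ forcing $\dim\partial X<\infty$. Your explicit remark that ${\mathcal L}_\lambda:[D({\mathcal A})]\to[D(A_0)]\times\partial X$ is an isomorphism for the graph norms is exactly the point the paper uses implicitly in its decompositions of $i_{[D({\mathcal A})],{\mathcal Y}}$ and $i_{[D({\mathcal A})],{\mathcal V}}$, so no new idea is involved.
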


\begin{proof}
Let us assume that the non-trivial case ${\rm dim}\; X=\infty$ holds.

If $L\not\in{\mathcal L}(Y,\partial X)$, then the setting is as in Section~4 and the Kisy\'nski  space associated with $\mathcal A$ is $\mathcal Y$. Take $\lambda\in\rho(A_0)$ and observe that the operator ${\mathcal L}_\lambda$ defined in~\eqref{fac} is an isomorphism on $\mathcal Y$, and that it maps $D({\mathcal A})$ into ${\mathcal L}_\lambda D({\mathcal A})=D(A_0)\times \partial X$.
Since we can decompose 
$$i_{[D({\mathcal A})],{\mathcal Y}}=i_{D(A_0)\times\partial X,{\mathcal Y}}
\circ {\mathcal L}_\lambda,$$ 
the claim follows, as ${\mathcal L}_\lambda$ is not compact.

Let now $L\in{\mathcal L}(Y,\partial X)$. As shown in Section~5, in this case the Kisy\'nski space associated with $\mathcal A$ is $\mathcal V$. Take $\lambda\in\rho(A_0)$. By Lemma~\ref{mainlemma} the operator ${\mathcal L}_\lambda$ is an isomorphism from $\mathcal V$ onto $\mathcal W=V\times\partial X$. Thus, we can decompose 
$$i_{[D({\mathcal A})],{\mathcal V}}=
{\mathcal L}_\lambda^{-1}\circ i_{[D(A_0)]\times \partial X,V\times \partial X}
\circ {\mathcal L}_\lambda.$$ 
Likewise we obtain 
$$i_{{\mathcal V},{\mathcal X}}=
i_{{\mathcal W},{\mathcal X}}\circ {\mathcal L}_\lambda.$$
Since ${\mathcal L}_\lambda$ is not compact, we obtain that $i_{[D({\mathcal A})],{\mathcal V}}$ and 
$i_{{\mathcal V},{\mathcal X}}$ are both compact if and only if $i_{[D(A_0)]\times \partial X,{\mathcal W}}$ and $i_{{\mathcal W},{\mathcal X}}$ are both compact. By definition of the product spaces $\mathcal W$ and $\mathcal X$ the claim follows.
\end{proof}

Finally, we briefly turn to discuss the regularity of the solutions to $(\rm{AIBVP}_2)$.

\begin{prop}\label{reguldyn}
Let $\mathcal A$ generate a cosine operator function. If the initial data $f,g$ belong to 
\begin{equation}\label{regset}
{\mathcal D}^\infty_0:=\bigcap_{h=0}^{\infty}\left\{u\in D(A^\infty): LA^h u=BA^h u=0\right\}
\end{equation}
and moreover $h=j=0$, then the unique classical solution $u=u(t)$ to $({\rm AIBVP}^2)$ belongs to $D(A^\infty)$, for all $t\in\mathbb R$.
\end{prop}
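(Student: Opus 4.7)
The plan is to lift the hypotheses on $f,g$ through the operator matrix $\mathcal{A}$ and then push them back down to the first coordinate.

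First I would verify that $\mathfrak{f}=\binom{f}{0}$ and $\mathfrak{g}=\binom{g}{0}$ lie in $D(\mathcal{A}^\infty)$. Proceed by induction on $n$: showing that $\mathcal{A}^n\mathfrak{f}=\binom{A^n f}{0}$ (and the analogous identity for $\mathfrak{g}$) for every $n\geq 0$. The base case $n=0$ is trivial. For the inductive step, assuming $\mathcal{A}^n\mathfrak{f}=\binom{A^n f}{0}$, membership in $D(\mathcal{A})$ requires $A^n f\in D(A)$ and $L(A^n f)=0$; both are guaranteed by the assumption $f\in\mathcal{D}^\infty_0$. Applying $\mathcal{A}$ then gives
\begin{equation*}
\mathcal{A}^{n+1}\mathfrak{f}=\begin{pmatrix}A^{n+1}f\\ BA^n f+\tilde{B}\cdot 0\end{pmatrix}=\begin{pmatrix}A^{n+1}f\\ 0\end{pmatrix},
\end{equation*}
where the vanishing of the second coordinate uses $BA^n f=0$, again from the definition of $\mathcal{D}^\infty_0$.

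Next I would invoke the standard commutation relations for cosine and sine operator functions: if $K$ generates a cosine operator function then $C(t,K)$ and $S(t,K)$ leave $D(K^n)$ invariant and commute with $K^n$ on that domain (iterate the elementary identity $KC(t,K)x=C(t,K)Kx$ valid on $D(K)$). Applied to $K=\mathcal{A}$ and to the representation~\eqref{cosres} of the classical solution, this yields
\begin{equation*}
\mathcal{A}^n\mathfrak{u}(t)=C(t,\mathcal{A})\mathcal{A}^n\mathfrak{f}+S(t,\mathcal{A})\mathcal{A}^n\mathfrak{g}\qquad\text{for every }n\in\mathbb{N},\; t\in\mathbb{R},
\end{equation*}
so $\mathfrak{u}(t)\in D(\mathcal{A}^\infty)$ for all $t\in\mathbb{R}$.

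Finally, I would transfer this regularity from $\mathfrak{u}(t)$ to its first coordinate $u(t)$ by a further induction. If $\mathfrak{u}(t)\in D(\mathcal{A})$, then by definition of $D(\mathcal{A})$ the first coordinate $u(t)$ lies in $D(A)$ and the first coordinate of $\mathcal{A}\mathfrak{u}(t)$ equals $Au(t)$. Iterating, if $\mathfrak{u}(t)\in D(\mathcal{A}^{n+1})$, then $\mathcal{A}^n\mathfrak{u}(t)\in D(\mathcal{A})$, forcing its first coordinate, which is $A^n u(t)$, to lie in $D(A)$; thus $u(t)\in D(A^{n+1})$. Taking the intersection over $n$ gives $u(t)\in D(A^\infty)$, as claimed.

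The only subtle point, and hence the main thing to verify carefully, is the closure of $\mathcal{D}^\infty_0$ under the actions needed in the first induction, i.e.\ that the cascade of boundary conditions $LA^h u = BA^h u = 0$ is exactly what lets the off-diagonal entries $B,\tilde{B}$ and the constraint $Lu=x$ not produce any nonzero second-coordinate contribution at each level; but this is precisely how $\mathcal{D}^\infty_0$ is designed in~\eqref{regset}.
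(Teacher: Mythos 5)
Your argument is correct and follows essentially the same route as the paper: you show by induction (using the cascade of conditions $LA^hu=BA^hu=0$) that $\binom{f}{0},\binom{g}{0}\in D(\mathcal{A}^\infty)$ with $\mathcal{A}^n$ acting as $A^n$ in the first coordinate, then use the invariance of $D(\mathcal{A}^\infty)$ under $C(t,\mathcal{A})$ and $S(t,\mathcal{A})$ together with the representation~\eqref{cosres}, exactly as in the paper, which only differs in citing the semigroup reduction (Lemma~\ref{wellp2char} plus a result in \cite{[EN00]}) for the invariance where you derive it from the commutation identity, and in stating the inclusion $D(\mathcal{A}^\infty)\subset D(A^\infty)\times\partial X$ where you run your final induction on the first coordinate.
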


\begin{proof}
It follows by Lemma~\ref{wellp2char} and~\cite[Prop.~II.5.2]{[EN00]} that $C(t,\mathcal{A})$ and $S(t,\mathcal{A})$  map $D(\mathcal{A}^\infty)$ into itself for all $t\in\mathbb R$. One can prove by induction that ${\mathcal D}^\infty_0\times\{0\}\subset D({\mathcal A}^\infty)$. Since $D({\mathcal A}^\infty)\subset D(A^\infty)\times \partial X$, taking into account~\eqref{cosres} the claim follows.
\end{proof}

\section{Applications}

Let us apply the abstract theory developed in Section~5 to a concrete operator. 

\begin{prop}\label{appl1}
Let $p\in[1,\infty)$. Then the operator matrix 
\begin{equation}\label{aappl}
{\mathcal A}:=\begin{pmatrix}
\frac{d^2}{d x^2}+q\frac{d}{dx}+rI & 0\\
\begin{pmatrix}
\alpha_{0} \delta_{0}' \\
\alpha_1 \delta_1'
\end{pmatrix}
& \begin{pmatrix}
\beta_{0} & 0\\
0 & \beta_1
\end{pmatrix}
\end{pmatrix}
\end{equation}
with domain
\begin{equation}\label{aappldom}
D({\mathcal A}):=\left\{\begin{pmatrix}u\\ \begin{pmatrix}x_{0} \\ x_1\end{pmatrix}
\end{pmatrix}\in W^{2,p}(0,1)\times {\mathbb C}^2: u(0)=x_{0},\;u(1)=x_1\right\}
\end{equation}
generates a cosine operator function on ${\mathcal X}:=L^p(0,1)\times {\mathbb C}^2$ for all $q\in L^\infty(0,1)$, $r\in L^p(0,1)$, and $\alpha_{0},\alpha_1,\beta_{0},\beta_1\in{\mathbb C}$. The associated Kisy\'nski space is 
$$\mathcal{V}:=\left\{\begin{pmatrix} 
u\\ \begin{pmatrix}x_{0} \\ x_1\end{pmatrix} 
\end{pmatrix}\in W^{1,p}(0,1)\times {\mathbb C}^2: u(0)=x_{0},\;
u(1)=x_1\right\}.$$ 
Moreover, $\mathcal A$ has compact resolvent, hence the associated sine operator function is compact.
\end{prop}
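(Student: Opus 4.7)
The plan is to recognize the problem as a concrete instance of the setting of Section~5 and then combine Theorem~\ref{main2} with Corollary~\ref{rhandi}. I would take $X:=L^p(0,1)$, $Y:=W^{1,p}(0,1)$, $V:=W^{1,p}_0(0,1)$, $\partial X:=\mathbb{C}^2$, $A:=\frac{d^2}{dx^2}+q\frac{d}{dx}+rI$ with $D(A):=W^{2,p}(0,1)$, $Lu:=\bigl(u(0),u(1)\bigr)^\top$, $Bu:=\bigl(\alpha_0 u'(0),\alpha_1 u'(1)\bigr)^\top$, and $\tilde B:=\mathrm{diag}(\beta_0,\beta_1)$. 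The Sobolev embedding $W^{1,p}(0,1)\hookrightarrow C[0,1]$ shows that $L$ is unbounded from $X$ but bounded from $Y$, and that $B$ is bounded from $[D(A)_L]=W^{2,p}(0,1)$ to $\mathbb{C}^2$; combined with standard facts on the Dirichlet realization $A_0$ (with $D(A_0)=W^{2,p}(0,1)\cap W^{1,p}_0(0,1)$), Assumptions~\ref{ass}, \ref{assbis}, and~\ref{ass2} are all readily verified, and $\ker L=V$ as required.

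The first substantive step is to show that $A_0$ generates a cosine operator function on $L^p(0,1)$ with associated phase space $W^{1,p}_0(0,1)\times L^p(0,1)$. For $q=r=0$ this is classical: the Dirichlet Laplacian on $[0,1]$ is obtained by odd-periodic reflection from the second derivative on $L^p(\mathbb{R})$, which in turn generates a cosine operator function via the d'Alembert formula. The lower-order terms $u\mapsto qu'+ru$ define a bounded operator from $W^{1,p}(0,1)$ into $L^p(0,1)$ (using $q\in L^\infty$, $r\in L^p$ and once more $W^{1,p}\hookrightarrow L^\infty$ in one dimension), so they are in particular a bounded perturbation from $[D(A_0)]$ into the Kisy\'nski space $W^{1,p}_0(0,1)$, and Lemma~\ref{pert} produces the cosine operator function for the full $A_0$ with the same phase space.

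The second step is to apply Corollary~\ref{rhandi} to pass from $A_0$ to $\mathcal{A}$. The Dirichlet operator $D^A_\lambda:\mathbb{C}^2\to W^{2,p}(0,1)$ is obtained by solving an ODE with constant-type exponential fundamental solutions of $(\lambda-A)u=0$, from which a direct computation in $L^p$ yields the decay $\|D^A_\lambda\|_{\mathcal{L}(\mathbb{C}^2,L^p)}=O(|\lambda|^{-1/(2p)})$ as $|\lambda|\to\infty$ with $\mathrm{Re}\,\lambda>0$, so (D) holds. Condition (R), namely $\int_0^1\|BS(s,A_0)f\|_{\mathbb{C}^2}\,ds\le M\|f\|_{L^p}$ for $f\in D(A_0)$, is the main technical obstacle: it reduces to a trace estimate for $(S(s,A_0)f)'$ at the endpoints $x=0,1$, which I would obtain via the explicit d'Alembert-type representation of $S(s,A_0)$ after odd-periodic extension, giving a pointwise expression of $(S(s,A_0)f)'(j)$ as the symmetric difference of the extension of $f$ and hence $L^1$-in-$s$ control by $\|f\|_{L^p}$. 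With (D) and (R) in hand, Corollary~\ref{rhandi} yields that $\mathcal{A}$ generates a cosine operator function with associated phase space $\mathcal{V}\times\mathcal{X}$; the description of $\mathcal{V}$ in the statement is exactly~\eqref{spacey}.

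Finally, the resolvent of $\mathcal{A}$ is compact by Proposition~\ref{compres}, since $\partial X=\mathbb{C}^2$ is finite-dimensional and $[D(A_0)]=W^{2,p}(0,1)\cap W^{1,p}_0(0,1)\hookrightarrow L^p(0,1)$ is compact by Rellich--Kondrachov; compactness of the associated sine operator function then follows from Corollary~\ref{compsine}. The hard step is verifying the Rhandi condition (R) with $r$ only in $L^p$ rather than $L^\infty$; everything else is a bookkeeping translation of the concrete data into the abstract framework developed in Sections~3--5.
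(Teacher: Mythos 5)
Your overall strategy is the same as the paper's: recast the problem in the Section~5 framework, apply Theorem~\ref{main2} via Corollary~\ref{rhandi}, and get compactness from Proposition~\ref{compres} and Corollary~\ref{compsine}. The setup, the verification of (D), and the compactness part are essentially fine. However, your treatment of the lower-order terms $A_2u:=qu'+ru$ contains a genuine gap, in two places. First, the appeal to Lemma~\ref{pert} is incorrect: $A_2$ is \emph{not} bounded from $[D(A_0)]$ into the Kisy\'nski space $W^{1,p}_0(0,1)$. For $q\in L^\infty(0,1)$ and $r\in L^p(0,1)$ the function $qu'+ru$ is in general only in $L^p(0,1)$, not in $W^{1,p}(0,1)$, and even for smooth coefficients $qu'$ need not vanish at the endpoints, so the target space $W^{1,p}_0$ is wrong. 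What is true (and what you actually verify) is $A_2\in\mathcal{L}(Y,X)$, hence $A_2\in\mathcal{L}(V,X)$; the correct tool is the perturbation result for operators bounded from the Kisy\'nski space into the base space (argue as in the proof of Lemma~\ref{pert}, the perturbing matrix now being bounded on the phase space $V\times X$ itself), which is exactly what the paper means when it says that $A_{2_0}$ ``can be neglected''.

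Second, and more seriously: because you absorb $A_2$ into $A_0$ \emph{before} invoking Corollary~\ref{rhandi}, condition (R) must be checked for the sine function of the full operator $A_0$, which has no d'Alembert-type explicit representation when $q,r\neq 0$. Your sketch (symmetric differences of the odd $2$-periodic extension) is valid only for the pure second derivative $A_{1_0}$, and you yourself flag this ``hard step'' as not carried out -- so the key estimate of the proof is missing, and the difficulty is not really ``$r\in L^p$ versus $L^\infty$'' but the non-explicitness of $S(\cdot,A_0)$. The paper avoids this by reversing the order of the two perturbations: it verifies (D) and (R) for the unperturbed operator $A_1=\frac{d^2}{dx^2}$, where $S(t,A_{1_0})$ is explicit and $BS(t,A_{1_0})f=\bigl(\alpha_0 f(t),\,-\alpha_1 f(1-t)\bigr)$ yields the $L^1$-in-$t$ bound at once, and where the decay of $D^{A}_\lambda$ follows from the explicit $\sinh$-formula for $D^{A_1}_\lambda$ together with Remark~\ref{simpler}, since $D(A)=D(A_1)$; only afterwards is $A_{2_0}\in\mathcal{L}(V,X)$ added. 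To repair your argument, either restructure it in this order, or supply an additional (e.g.\ variation-of-constants) argument showing that condition (R) is stable under $\mathcal{L}(V,X)$-perturbations -- which your proposal does not do.
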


\begin{proof} 
In order to apply the abstract results of Section~4, we begin by recasting the above problem in an abstract framework. Set 
$$X:=L^p(0,1),\qquad Y:=W^{1,p}(0,1), \qquad {\partial X}:={\mathbb C}^2.$$
We define the linear operators  
$$Au:=u''+qu'+ru\qquad\hbox{for all}\; u\in D({A}):=W^{2,p}(0,1),$$
$$Bu:=\begin{pmatrix}\alpha_{0} u'(0)\\ \alpha_1
u'(1)\end{pmatrix}\qquad\hbox{for all}\; u\in D(B):=D(A),$$
$$Lu:=\begin{pmatrix}u(0)\\ u(1)\end{pmatrix}\qquad\hbox{for all}\; u\in D(L):=Y,$$
$$\tilde{B}:=\begin{pmatrix}
\beta_{0} & 0 \\
0 & \beta_1
\end{pmatrix}.$$
Therefore, we obtain $V=\ker(L)=W^{1,p}_0(0,1)$.

In the following, it will be convenient to write $A$ as the sum
$$A:=A_1+A_2:=\frac{d^2}{d x^2}+\left(q\frac{d}{dx}+rI\right),$$
and to define $A_{1_0}$ and $A_{2_0}$ as the restrictions of $A_1$ and $A_2$, respectively, to 
$$D(A_0):=D(A)\cap \ker(L)=W^{2,p}(0,1)\cap W^{1,p}_0(0,1).$$

The Assumptions~\ref{ass} and~\ref{assbis} have been checked in \cite[\S~9]{[KMN03b]} for the case $p=2$ and for analogous operators, and can be similarly proved for all $p\in[1,\infty)$. Due to the embedding $W^{1,p}(0,1)\hookrightarrow C([0,1])$ the Assumptions~\ref{ass2} are satisfied as well. 

Since $q\in L^\infty(0,1)$ and $r\in L^p(0,1)$, one obtains that $qu'+ru\in L^p(0,1)$ for all $u\in W^{1,p}(0,1)$. Thus, the perturbing operator $A_{2_0}$ is bounded from $V$ to $X$ and we can neglect it. 

On the other hand, the operator $A_{1_0}$ is the second derivative with Dirichlet boundary conditions on $L^p(0,1)$, hence it generates a cosine operator function that, as a consequence of the D'Alembert formula, is given by 
\begin{equation}\label{cosine}
C(t,A_{1_0})f(x)=\frac{{\tilde f}(x+t)+{\tilde f}(x-t)}{2},\qquad t\in{\mathbb R},\; x\in(0,1),
\end{equation}
where $\tilde{f}$ is the function obtained by extending $f\in L^p(0,1)$ first by oddity to $(-1,1)$, and then by 2-periodicity to $\mathbb R$. The associated Kisy\'nski space is $W^{1,p}_0(0,1)$, i.e., $V$. Thus, we can apply Corollary~\ref{rhandi} and obtain that the operator matrix $\mathcal A$ generates a cosine operator function with associated phase space ${\mathcal V}\times {\mathcal X}$ if the conditions (D) (for the Dirichlet operator $D_\lambda^A$ or equivalently, by Remark~\ref{simpler}, for the Dirichlet operator $D^{A_1}_\lambda$ associated with the unperturbed operator $A_1$) and (R) are satisfied.

Solving an ordinary differential equation one can see that $D^{A_1}_\lambda$ is given by
$$(D^{A_{1}}_\lambda y)(s):=\frac{y_1\sinh{\sqrt{\lambda}}(1-s)+ y_2\sinh{\sqrt{\lambda}}(s)}{\sinh{\sqrt{\lambda}}},\qquad y=\begin{pmatrix}
y_1 \\ y_2\end{pmatrix}\in{\mathbb C}^2,\; s\in(0,1),$$
for all $\lambda>0$, and that $\Vert D^{A_{1}}_\lambda\Vert_{{\mathcal L}\left({\mathbb C}^2, L^p(0,1)\right)}=O(\vert\lambda\vert^{-\epsilon})$ as $\lambda\rightarrow +\infty$ if (and only if) $\epsilon< \frac{1}{2p}$, cf.~\cite[\S~2]{[GK91]}. Since by definition $D(A)=D(A_1)$, the same decay rate is enjoyed by the Dirichlet operator associated with $A$. Thus, condition (D) is satisfied.

To check condition (R), observe that integrating~\eqref{cosine} yields that the sine operator function generated by $A_{1_0}$ is given by
$$S(t,A_{1_0})f=\frac{1}{2}\int_{\cdot-t}^{\cdot+t}\tilde{f}(s)ds,\qquad t\in{\mathbb R}.$$
Thus, 
$$BS(t,A_{1_0})f=\frac{1}{2}\begin{pmatrix}\alpha_0\big(\tilde{f}(t)-\tilde{f}(-t)\big)\\
\alpha_1\big(\tilde{f}(1+t)-\tilde{f}(1-t)\big)\\ \end{pmatrix},\qquad t\in\mathbb{R},\; f\in D(A_{1_0}).$$
Since $\tilde{f}$ is by definition the odd, 2-periodic extension of $f$, we see that for $t\in(0,1)$ $\tilde{f}(1+t)=\tilde{f}(-1+t)=-\tilde{f}(1-t)=-f(1-t)$. We conclude that
$$BS(t,A_{1_0})f=\begin{pmatrix}
\alpha_0 f(t)\\ -\alpha_1 f(1-t)
\end{pmatrix},\qquad t\in (0,1),\; f\in D(A_{1_0}).$$
Let $M:=\vert\alpha_{0}\vert +\vert\alpha_1\vert$. Then, 
$$\int_0^1\vert B S(s,{A_0})f\vert\; ds\; = M\int_0^1\vert f(s)\vert\; ds
=M\Vert f\Vert_{L^1(0,1)}\leq M\Vert f\Vert_{L^p(0,1)}$$
for all $f\in D(A_{1_0})$. As already remarked, the perturbation $A_{2_0}$ is bounded from the Kisy\'nski space $V$ to $X$, and we finally conclude that $A_0=A_{1_0}+A_{2_0}$ generates a cosine operator function with associated phase space $V\times X$.

To show that $(S(t,{\mathcal A}))_{t\in{\mathbb R}}$ is a family of compact operators, observe that the Sobolev imbeddings $W^{2,p}(0,1)\cap W^{1,p}_0(0,1) \hookrightarrow W^{1,p}_0(0,1) \hookrightarrow L^p(0,1)$ are compact, hence we can apply Proposition~\ref{compres} and Corollary~\ref{compsine}.
\end{proof}

\begin{rem}\label{semkmn}
Observe that, as a consequence of Proposition~\ref{appl1}, we also obtain that \emph{the operator matrix $\mathcal A$ defined in~\eqref{aappl}--\eqref{aappldom} is the generator of an analytic semigroup of angle $\frac{\pi}{2}$ on} $L^p(0,1)\times {\mathbb C}^2$, $1\leq p<\infty$. An analogous operator matrix $\mathcal A$ on $L^p(\Omega)\times L^p(\partial \Omega)$, $\Omega\subset {\mathbb R}^n$, has also been considered in~\cite{[FGGR02]} (where $A$ is an elliptic operator in divergence form), and by different means in~\cite{[AMPR03]} (where $A=\Delta$). However, the analiticity of the semigroup on $L^1$ has not been proved either in~\cite{[FGGR02]} or in~\cite{[AMPR03]}.
\end{rem}

In view of Proposition~\ref{appl1}, we can tackle a generalization of the second order initial-boundary value problem~\eqref{motiv} and strengthen the statement in Proposition~1.1. We also characterize the periodicity of the solutions to~\eqref{motiv} in terms of the coefficients $\beta_0,\beta_1$ -- although numerically determining those values verifying our condition goes beyond the scope of our paper.

\begin{prop}\label{motivrev}
1. Let $p\in[1,\infty)$. If $q\in L^\infty(0,1)$, $r\in L^p(0,1)$, $\alpha_0,\alpha_1,\beta_0,\beta_1\in \mathbb C$, then the problem
\begin{equation}\label{motivreveq}
\left\{
\begin{array}{rcll}
 \ddot{u}(t,x)&=& u''(t,x)+q(x)u'(t,x)+r(x)u(t,x), &t\in{\mathbb R},\; x\in(0,1),\\
 \ddot{u}(t,j)&=& \alpha_j u'(t,j) + \beta_j u(t,j), &t\in{\mathbb R },\; j=0,1,\\
  u(0,\cdot)&=&f,\qquad  \dot{u}(0,\cdot)=g,&
\end{array}
\right.
\end{equation}
admits for all $f\in W^{2,p}(0,1)$ and $g\in W^{1,p}(0,1)$ a unique classical solution $u$, continuously depending on the initial data. If $f,g\in C^\infty_c([0,1])$, then $u(t)\in C^\infty([0,1])$ for all $t\in\mathbb R$.

2. Let $q\equiv 0$, $r\leq 0$, $\alpha_0=1$, $\alpha_1=-1$, and $(\beta_0, \beta_1)\in{\mathbb R}_-^2 \setminus\{0,0\}$. If $f\in  H^{2}(0,1)$ and $g\in H^{1}(0,1)$, then the solution $u$ to~\eqref{motivreveq} is (with respect to the $L^2$-norm) uniformly bounded in time and almost periodic. If further $r\equiv 0$, then $u$ is in fact periodic with period $2k$ if and only if the roots of the equation
\begin{equation}\label{cr}
\lambda^2+\lambda\left(1+
\frac{2{\sqrt{\lambda}}}{\tanh{\sqrt{\lambda}}}-(\beta_0+\beta_1)\right)
-\frac{{(\beta_0+\beta_1)\sqrt{\lambda}}}{\tanh{\sqrt{\lambda}}}+\beta_0\beta_1=0
\end{equation}
are contained in $-(\frac{\pi}{k})^2 \mathbb{N}^2$ for some positive integer $k$.

3. Let $q=r\equiv 0$ and $\alpha_{0}=\alpha_1=\beta_{0}=\beta_1=0$. If $f\in W^{2,p}(0,1)$ and $g\in W^{1,p}_0(0,1)$, then the solution $u$ to~\eqref{motivreveq} is (with respect to the $L^p$-norm) uniformly bounded in time and periodic with period 2. 
\end{prop}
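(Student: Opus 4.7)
Parts~1, 2 and 3 correspond to applications of Proposition~\ref{appl1}, Corollary~\ref{period} and Corollary~\ref{formula}, respectively. For part~1, I recognise~\eqref{motivreveq} as $(\mathrm{AIBVP}_2)$ with the operator data $(A,B,\tilde B,L)$ introduced in the proof of Proposition~\ref{appl1}; that proposition yields that $\mathcal A$ generates a cosine operator function on $\mathcal X = L^p(0,1)\times\mathbb C^2$ with Kisy\'nski space $\mathcal V$, and Remark~\ref{wellpo} converts this into the claimed well-posedness. For the regularity claim, any $u\in C^\infty_c([0,1])$ vanishes to infinite order at $0$ and $1$, so $LA^h u = BA^h u = 0$ for every $h\in\mathbb N$, placing $f$ and $g$ in the set $\mathcal D^\infty_0$ of Proposition~\ref{reguldyn}; that proposition then gives $u(t)\in D(A^\infty)\subset C^\infty([0,1])$ by standard elliptic regularity for the second-order ODE $Au$.

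For part~2, I first establish boundedness of $(C(t,\mathcal A))_{t\in\mathbb R}$ via the conserved energy
\[
E(t) = \tfrac12\bigl(\|\dot u(t,\cdot)\|_{L^2}^2 + \|u'(t,\cdot)\|_{L^2}^2 + |\dot u(t,0)|^2 + |\dot u(t,1)|^2 - \textstyle\int_0^1 r|u(t,\cdot)|^2\,dx - \beta_0|u(t,0)|^2 - \beta_1|u(t,1)|^2\bigr).
\]
Integration by parts in $\int_0^1 \dot u\,\overline{u''}\,dx$ produces the boundary fluxes $\dot u(t,1)\overline{u'(t,1)} - \dot u(t,0)\overline{u'(t,0)}$, and the choice $\alpha_0 = 1$, $\alpha_1 = -1$ is exactly what is needed for these to cancel against the cross-terms from $\ddot u(t,j) = \alpha_j u'(t,j) + \beta_j u(t,j)$; hence $\dot E \equiv 0$. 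Since $r \leq 0$ and $\beta_0,\beta_1 \leq 0$, every summand of $E$ is non-negative, and together with elementary one-dimensional Sobolev estimates $E$ dominates a complete norm on $\mathcal V$ equivalent to the Hilbertian one of Remark~\ref{remmain}.2; by density this forces $C(\cdot,\mathcal A)$ and $S(\cdot,\mathcal A)$ to be uniformly bounded in $t$. Proposition~\ref{compres} gives that $\mathcal A$ has compact resolvent ($H^2\cap H^1_0\hookrightarrow L^2$ is compact, $\dim\partial X = 2$), and Corollary~\ref{period}(1) delivers almost periodicity of the cosine; injectivity of $A_0$ (negative definite since $r\leq 0$) and of $B_0$ (invertible as a scalar $2\times 2$ matrix for the parameter ranges at hand) allow Corollary~\ref{period}(2) to produce almost periodicity of the sine as well, hence of $u$.

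For the periodicity characterisation with $r\equiv 0$: by~\eqref{charsp} the eigenvalues of $\mathcal A$ lying in $\rho(A_0)$ are exactly the roots of $\det(B_\lambda - \lambda I)=0$. Using the explicit Dirichlet operator from the proof of Proposition~\ref{appl1} and differentiating in $s$ at the endpoints, one obtains
\[
B_\lambda = \begin{pmatrix} \beta_0 - \sqrt{\lambda}\coth\sqrt{\lambda} & \sqrt{\lambda}/\sinh\sqrt{\lambda} \\ \sqrt{\lambda}/\sinh\sqrt{\lambda} & \beta_1 - \sqrt{\lambda}\coth\sqrt{\lambda} \end{pmatrix},
\]
and the hyperbolic identity $\coth^2 z - \sinh^{-2}z = 1$ collapses $\det(B_\lambda - \lambda I)=0$ to exactly~\eqref{cr}. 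Since $P\sigma(A_0)=-\pi^2\mathbb N^2 \subset -(\pi/k)^2\mathbb N^2$ automatically, Corollary~\ref{period}(2) combined with Remark~\ref{roots} turns the root condition into $2k$-periodicity. For part~3 all coefficients vanish and $A_0 = d^2/dx^2$ with Dirichlet conditions is invertible, so Corollary~\ref{formula} applies; the eigenvalues $-n^2\pi^2$ make $C(\cdot,A_0)$ and $S(\cdot,A_0)$ both $2$-periodic, hence~\eqref{expl} makes $C(\cdot,\mathcal A)$ $2$-periodic on $\mathcal X$, and since $g\in W^{1,p}_0 = \ker(L)$ places $\mathfrak g$ in $\ker(L)\times\{0\}$, Remark~\ref{unbound}.1 yields $2$-periodicity of $S(\cdot,\mathcal A)\mathfrak g$ as well; thus $u(t)$, the first component of $C(t,\mathcal A)\mathfrak f + S(t,\mathcal A)\mathfrak g$, is $2$-periodic.

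The main obstacles are the two computations in part~2: the energy-conservation identity (where the signs $\alpha_0=1,\alpha_1=-1$ drive the cancellation of the boundary fluxes) and the reduction of the determinantal eigenvalue equation to the specific shape~\eqref{cr} through the hyperbolic identity. A secondary subtlety flagged in Remark~\ref{remmain}.2 is that $\mathcal V$ carries no canonical norm; fortunately $E$ supplies a concrete equivalent Hilbertian norm that makes the boundedness conclusion meaningful.
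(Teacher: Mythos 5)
Your proposal is correct overall, and Parts 1 and 3 run along essentially the same lines as the paper (Proposition~\ref{appl1} plus Remark~\ref{wellpo} and Proposition~\ref{reguldyn} for Part 1; Corollary~\ref{formula}, Remark~\ref{unbound}/Remark~\ref{remmain}.(1) and \eqref{cosres} for Part 3 -- note only that the $2$-periodicity of $C(\cdot,A_0)$ and $S(\cdot,A_0)$ is most directly read off the d'Alembert formula~\eqref{cosine} rather than from the eigenvalues alone). Part 2 is where you genuinely diverge. The paper does not use an energy functional: it checks directly that, on the Hilbert space ${\mathcal X}=L^2(0,1)\times{\mathbb C}^2$ with its canonical inner product, $\mathcal A$ is symmetric and dissipative, injective by integration by parts, hence (using compactness of the resolvent) self-adjoint and strictly negative, and then invokes Goldstein's lemma \cite{[Go69]} to obtain contractive cosine and sine operator functions in one stroke, after which Corollary~\ref{period} gives almost periodicity. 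Your route via the conserved energy is correct -- indeed your potential term is exactly the inner product the paper exhibits a posteriori in Remark~\ref{motivrevrem}, and your computation makes the role of $\alpha_0=1$, $\alpha_1=-1$ transparent -- but it buys this at the cost of two steps you compress: (i) Corollary~\ref{period} needs $(C(t,{\mathcal A}))_{t\in\mathbb R}$ bounded in ${\mathcal L}({\mathcal X})$, and energy conservation gives this only after one observes that it bounds the whole reduction group in the energy norm of ${\mathcal V}\times{\mathcal X}$ and then reads off the second row of~\eqref{groupcos}; (ii) the injectivity of $B_0$ is asserted as obvious ``for the parameter ranges at hand'', but for general $r\le 0$ it requires either a short integration-by-parts argument for $v''+rv=0$ with $v'(0)=-\beta_0v(0)$, $v'(1)=\beta_1v(1)$, or the paper's implicit route through injectivity of $\mathcal A$ and~\eqref{charsp}; the paper's self-adjointness argument delivers the needed invertibility directly. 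Finally, for the characteristic equation you compute $B_\lambda$ and reduce $\det(B_\lambda-\lambda)=0$ to~\eqref{cr} explicitly via $\coth^2z-\sinh^{-2}z=1$, where the paper simply cites the computation in~\cite{[KMN03b]}; your derivation is correct and makes this part self-contained.
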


\begin{proof}
The concrete problem~\eqref{motivreveq} can be re-written in an abstract form as the second order Cauchy problem $({\mathcal{ACP}}_2)$, where $\mathcal A$ is the operator matrix defined in~\eqref{aappl}--\eqref{aappldom}.

1) In view of Remarks~\ref{wellpo} and~\ref{remmain}.(3), the well-posedness of~\eqref{motivreveq} follows directly by Proposition~\ref{appl1}. By Remark~\ref{wellpo}, the unique classical solution $u$ is in fact given by the first coordinate of
$$C(t,\mathcal{A})\begin{pmatrix}f\\ {f(0)\choose f(1)}\end{pmatrix} + S(t,\mathcal{A})\begin{pmatrix}g\\ {g(0)\choose g(1)}\end{pmatrix}, \qquad t\in\mathbb{R}.$$
To check the smoothness of $u$, consider the proof of Proposition~\ref{appl1} and observe that since $D(A)=W^{2,p}(0,1)$, we have $D(A^\infty)=C^\infty([0,1])$. Further, one sees that $C^\infty_c([0,1])$ is contained in ${\mathcal D}^\infty_0$ defined in~\eqref{regset}. The claim now follows by Proposition~\ref{reguldyn}.

2) Let $p=2$, $q\equiv 0$, $r\leq 0$, $\alpha_0=1$, $\alpha_1=-1$, and $(\beta_0, \beta_1)\in{\mathbb R}_-^2 \setminus\{0,0\}$. Then a direct computation shows that $\mathcal A$ is dissipative and symmetric on ${\mathcal X}:=L^2(0,1)\times \mathbb{C}^2$. Moreover, integrating by parts one sees that $\mathcal A$ is injective, and by its resolvent compactness we conclude that $\mathcal A$ is self-adjoint and strictly negative. Hence, by~\cite[Lemma~3.1]{[Go69]} it generates a contractive cosine operator function with associated contractive sine operator function. By Corollary~\ref{period}, $(C(t,{\mathcal A}))_{t\in\mathbb R}$ and $(S(t,{\mathcal A}))_{t\in\mathbb R}$ are almost periodic, too. 

Let now also $r\equiv 0$. Then $A_0$ is the second derivative on $(0,1)$ with Dirichlet boundary conditions, which generates a $2$-periodic cosine operator function (and in fact $\sigma(A_0)=-\pi^2{\mathbb N}^2$). By Remark~\ref{roots}.(2), in order to show that the cosine and sine operator functions generated by $\mathcal A$ are $2k$-periodic it suffices to show that the eigenvalues of the $2\times 2$ matrix $\tilde{B}+BD^A_\lambda$, $\lambda\in\rho(A_0)$, lie in the set $-(\frac{\pi}{k})^2\mathbb{N}^2$. But it has been computed in~\cite[\S~9]{[KMN03b]} that a given $\lambda\in\rho(A_0)$ is an eigenvalue of $\tilde{B}+BD^A_\lambda$ if and only if it is a root of the characteristic equation~\eqref{cr}.

3) Let finally $q=r\equiv 0$ and $\alpha_{0}=\alpha_1=\beta_{0}=\beta_1=0$. Then $A_0$ is the second derivative with Dirichlet boundary conditions, which is invertible and generates on $L^p(0,1)$, $1\leq p<\infty$, the 2-periodic cosine operator function defined in~\eqref{cosine}. Hence, by Remark~\ref{remmain}.(1) we deduce that $(C(t,\mathcal{A}))_{t\in\mathbb R}$ and $(S(t,\mathcal{A}))_{t\in\mathbb R}$ are 2-periodic on $\mathcal X$ and $V\times\{0\}$, respectively. The claim now follows by~\eqref{cosres}.
\end{proof}

\begin{rem}\label{motivrevrem}
Let us consider the setting as in Proposition~\ref{motivrev}.(2). By Proposition~\ref{appl1} we already known that the Kisy\'nski space is 
$${\mathcal V}:=\left\{\begin{pmatrix}u\\ 
\begin{pmatrix}x_0 \\ x_1\end{pmatrix}
\end{pmatrix}\in H^1(0,1)\times {\mathbb C}^2: u(0)=x_0,\; u(1)=x_1\right\}.$$
However, as pointed out in Remark~\ref{remmain}.(2) one still needs to precise which norm endows such a space. One can in fact check that the inner product 
$$\left<\cdot,\cdot\right>_{\mathcal V}:=-\int_0^1 u'(s) \overline{v'(s)} ds +\int_0^1 q(s)u(s)\overline{v(s)} ds +\beta_0u(0)\overline{v(0)} 
+\beta_1u(1)\overline{v(1)},$$
on $\mathcal V$ makes the reduction matrix associated with $\mathcal A$ dissipative on ${\mathcal V}\times\mathcal X$, and therefore it makes $(C(t,{\mathcal A}))_{t\in\mathbb R}$ and $(S(t,{\mathcal A}))_{t\in\mathbb R}$ contractive. Observe that the norm associated with such an inner product on $\mathcal V$ is actually equivalent to the product norm defined by
$$\vert\Vert u\Vert\vert^2 := \Vert u\Vert^2_{H^1(0,1)} + \vert u(0)\vert^2 + \vert u(1)\vert^2.$$
\end{rem}

Let us finally consider a problem that fits into the framework of Section~4, where the boundary variable is not the trace of the inner variable anymore, but rather its normal derivative. 
Observe that the problem~\eqref{DBC} below bears a strong resemblance to what in the literature is called a wave equation with acoustic boundary conditions (see~\cite{[GGG03]} and \cite{[Mu04]}).

\begin{prop}\label{appl2}
Let $p,q,r\in L^\infty(\partial \Omega)$, where $\Omega$ is an open bounded domain of ${\mathbb R}^n$ with boundary $\partial\Omega$ smooth enough. Then the problem
\begin{equation}\label{DBC}
\left\{
\begin{array}{rcll}
 \ddot{u}(t,x)&=& \Delta u, &t\in{\mathbb R},\; x\in\Omega,\\
 {\ddot{\delta}}(t,z)&=& p(z)\dot{u}(t,z)+q(z) \delta(t,z)+r(z)\dot{\delta}(t,z), &t\in{\mathbb R },\; z\in\partial\Omega,\\
  \delta(t,z)&=&{\frac{\partial u}{\partial \nu}}(t,z), &t\in{\mathbb R },\; z\in\partial\Omega,\\
  u(0,\cdot)&=&f, \qquad \dot{u}(0,\cdot)=g,\qquad \dot{\delta}(0,\cdot)=j.&\\
\end{array}
\right.
\end{equation}
admits a unique classical solution for all initial conditions $f\in H^2(\Omega)$, $g\in H^1(\Omega)$, and $j\in L^2(\partial\Omega)$, which depends continuously on them.
\end{prop}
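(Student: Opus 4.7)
The plan is to cast \eqref{DBC} in the abstract framework of Section~4, with a damping term as in Remark~\ref{dampbis}. I set
$$X:=L^2(\Omega),\qquad Y:=H^1(\Omega),\qquad \partial X:=L^2(\partial\Omega),$$
$$Au:=\Delta u,\quad D(A):=H^2(\Omega),\qquad Lu:=\frac{\partial u}{\partial\nu},$$
so that $\ker(L)=\{u\in H^2(\Omega):\partial_\nu u=0\}$ and $A_0=\Delta_N$ is the Neumann Laplacian. The boundary equation in \eqref{DBC} then has the form $(Lu)^{\cdot\cdot}=Bu+C\dot u+\tilde{B}(Lu)+\tilde C(Lu)^{\cdot}$ of Remark~\ref{dampbis}, with $B:=0$, $Cu:=p\cdot\mathrm{tr}(u)$, $\tilde B:=q\cdot$, and $\tilde C:=r\cdot$ (pointwise multiplications on $\partial\Omega$).

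First I would verify Assumptions~\ref{ass} and~\ref{assbis}: that $X,Y,\partial X$ are Banach spaces with $Y\hookrightarrow X$; that $L:H^2(\Omega)\to L^2(\partial\Omega)$ is linear and surjective (standard trace theory for sufficiently smooth $\partial\Omega$); that $A_0$ is closed, densely defined, self-adjoint, non-positive and has compact resolvent (hence $\rho(A_0)\neq\emptyset$); and that $\begin{pmatrix}A\\ L\end{pmatrix}:H^2(\Omega)\to L^2(\Omega)\times L^2(\partial\Omega)$ is closed (this follows from elliptic regularity: $H^2$-norm is controlled by the graph norm). Since $B=0$, Assumption~\ref{assbis}.(1) is trivial; $\tilde B$ is bounded on $L^2(\partial\Omega)$ because $q\in L^\infty(\partial\Omega)$. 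Next, since the trace operator extends to a bounded map $H^1(\Omega)\to H^{1/2}(\partial\Omega)\hookrightarrow L^2(\partial\Omega)$ and $p,r\in L^\infty(\partial\Omega)$, the damping operators $C\in\mathcal L(Y,\partial X)$ and $\tilde C\in\mathcal L(\partial X)$ are bounded, so the damping matrix $\mathcal C$ in Remark~\ref{dampbis} is bounded on $\mathcal Y=Y\times\partial X$. The normal derivative is \emph{not} bounded from $H^1(\Omega)$ to $L^2(\partial\Omega)$, so we are genuinely in the setting $L\not\in\mathcal L(Y,\partial X)$ of Section~4.

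By Theorem~\ref{main} together with Remark~\ref{dampbis}, the problem \eqref{DBC} is well-posed on $\mathcal X=L^2(\Omega)\times L^2(\partial\Omega)$ with phase space $\mathcal Y\times\mathcal X$ if and only if $A_0$ generates a cosine operator function on $L^2(\Omega)$ with associated Kisy\'nski space $H^1(\Omega)$. The Neumann Laplacian $A_0$ is self-adjoint and dissipative (non-positive) on $L^2(\Omega)$, hence by the classical result of Kisy\'nski (cf.~\cite[Example~3.14.16]{[ABHN01]}) it generates a cosine operator function, and the Kisy\'nski space coincides with the form domain, i.e.\ $H^1(\Omega)=Y$. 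Accordingly, the compatibility condition $Lf=h$ required by Definition~\ref{defiaibvp2} is automatically enforced here by setting $h:=\partial_\nu f\in H^{1/2}(\partial\Omega)\subset L^2(\partial\Omega)$, which makes sense precisely because $f\in H^2(\Omega)$.

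The main obstacle is conceptual rather than computational: one must recognize that the term $p\dot u$ in the boundary equation is an \emph{unbounded} damping (unbounded from $X$ to $\partial X$, since the trace is not defined on $L^2(\Omega)$), but is nevertheless \emph{bounded from the Kisy\'nski space} $Y=H^1(\Omega)$ to $\partial X=L^2(\partial\Omega)$. This is exactly the class of damping perturbations permitted by Remark~\ref{dampbis}, and it is what makes the abstract machinery applicable. Once this is observed, the conclusion about existence, uniqueness and continuous dependence on $(f,g,j)$ (with $h=\partial_\nu f$ forced) follows from Theorem~\ref{main} and the representation \eqref{cosres}.
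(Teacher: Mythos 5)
Your strategy is the same as the paper's (recast \eqref{DBC} in the Section~4 framework with $B=0$, treat $p\cdot$(trace) and $r\cdot$ as a damping pair via Remark~\ref{dampbis}, and reduce to the Neumann Laplacian generating a cosine function with Kisy\'nski space $H^1(\Omega)$), but there is one genuine gap: your choice $D(A):=H^2(\Omega)$. With this domain the boundary operator $L=\partial/\partial\nu$ is \emph{not} surjective onto $\partial X=L^2(\partial\Omega)$: the normal derivative maps $H^2(\Omega)$ onto $H^{1/2}(\partial\Omega)$, a proper (dense) subspace of $L^2(\partial\Omega)$, so Assumption~\ref{ass}.(3) fails, contrary to what you assert via ``standard trace theory''. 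This is not cosmetic: the Dirichlet operators $D^A_\lambda$ of Lemma~\ref{lemmamu} must invert $L$ on $\ker(\lambda-A)$ for \emph{every} boundary datum in $L^2(\partial\Omega)$, and they are the backbone of the factorization \eqref{fac} on which Theorem~\ref{main} (and hence Remark~\ref{dampbis}) rests; for $x\in L^2(\partial\Omega)\setminus H^{1/2}(\partial\Omega)$ the solution of $\Delta u=\lambda u$, $\partial_\nu u=x$ lies only in $H^{3/2}(\Omega)$, not in $H^2(\Omega)$. Concretely, with your domain the coupled matrix $\mathcal A$ is a proper restriction of the correct operator, $\lambda-\mathcal A$ cannot be surjective onto $\mathcal X$, and Theorem~\ref{main} is simply not applicable.

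The paper avoids this by taking the larger domain $D(A):=\{u\in H^{3/2}(\Omega):\Delta u\in L^2(\Omega)\}$, with $\partial/\partial\nu$ understood in the generalized sense; Assumptions~\ref{ass} for this choice are checked in \cite[Thm.~2.7]{[Mu04]}, and elliptic regularity still yields $\ker(L)=\{u\in H^2(\Omega):\partial_\nu u=0\}$, so $A_0$ is the Neumann Laplacian as you want. Everything else in your argument --- $B=0$, $\tilde B$ and $\tilde C$ bounded multiplications on $L^2(\partial\Omega)$, $C$ equal to multiplication by $p$ composed with the trace and hence bounded from $Y=H^1(\Omega)$ to $\partial X$ (so that the damping matrix is bounded on $\mathcal Y$), generation of a cosine function by $A_0$ with phase space $H^1(\Omega)\times L^2(\Omega)$, then Theorem~\ref{main} combined with Remark~\ref{dampbis} --- coincides with the paper's proof. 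So the fix is only to enlarge $D(A)$; note that $H^2(\Omega)\subset D(A)$, so the initial datum $f\in H^2(\Omega)$ (with $h=\partial_\nu f$) remains admissible.
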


\begin{proof} 
Set 
$$X:=L^2(\Omega),\qquad Y:=H^1(\Omega),\qquad {\partial X}:=L^2(\partial \Omega),$$ 
and define
$$A:=\Delta,\qquad D(A):=\left\{u\in H^\frac{3}{2}(\Omega) : \Delta u\in L^2(\Omega)\right\},$$
$$L:=\frac{\partial}{\partial \nu},\qquad D(L):= D(A),$$
$$B:=0,\qquad \tilde{B}v:=q\!\cdot\! v,\qquad \tilde{C}v:=r\!\cdot\! v,\qquad D(\tilde{B})=D(\tilde{C}):=\partial X,$$
$$(Cu)(z):=p(z)u(z),\qquad \hbox{for all}\; u\in H^1(\Omega),\; z\in\partial \Omega.$$
First consider the undamped case of $p=r\equiv 0$. The Assumptions~\ref{ass} have been checked in the proof of~\cite[Thm.~2.7]{[Mu04]}, while $\Vert\tilde{B}\Vert_{\partial X}=\Vert q\Vert_\infty$, hence also the Assumptions~\ref{assbis} are satisfied. One can see that
$$D(A_0)=\ker(L)=\left\{u\in H^2(\Omega):\frac{\partial u}{\partial \nu}=0\right\},$$
hence $A_0$ is the Laplacian with Neumann boundary conditions, which generates a cosine operator function with associated phase space $H^1(\Omega)\times L^2(\Omega)$ (see~\cite[Thm.~IV.5.1]{[Fa85]}). Hence by Theorem~\ref{main} the operator matrix with coupled domain associated with~\eqref{DBC} generates a cosine operator function with associated phase space $\big(H^1(\Omega)\times L^2(\partial \Omega)\big)\times \big(L^2(\Omega)\times L^2(\partial \Omega)\big)$. By Corollary~\ref{compsine}, the associated sine operator function is compact if and only if $\Omega\subset\mathbb R$ is a bounded interval.
			
For arbitrary $p,r\in L^\infty(\partial \Omega)$, $C$ is as a multiplicative perturbation of the trace operator, which is bounded from $Y=H^1(\Omega)$ to $\partial X=L^2(\partial \Omega)$, while $\tilde C$ is a bounded multiplication operator on $L^2(\partial \Omega)$. By Remark~\ref{dampbis} we finally obtain the well-posedness of~\eqref{DBC}.
\end{proof}

Since the Neumann Laplacian generates a cosine operator function on $L^p(\Omega)$, $\Omega\subset{\mathbb R}^n$, if and only if $p=2$ or $n=1$ (see~\cite{[KW03]}), one sees that the problem~\eqref{DBC} is well-posed in an $L^p$-context if and only if $p=2$ or $n=1$.

\end{document}